\newcolumntype{L}[1]{>{\raggedright\let\newline\\\arraybackslash\hspace{0pt}}m{#1}}
\newcolumntype{C}[1]{>{\centering\let\newline\\\arraybackslash\hspace{0pt}}m{#1}}
\newcolumntype{R}[1]{>{\raggedleft\let\newline\\\arraybackslash\hspace{0pt}}m{#1}}
\newtheorem{theorem}{Theorem}
\newtheorem{proposition}[theorem]{Proposition}
\theoremstyle{definition}
\newtheorem{definition}[theorem]{Definition}
\theoremstyle{lemma}
\newtheorem{lemma}[theorem]{Lemma}
\theoremstyle{remark}
\newtheorem{remark}[theorem]{Remark}
\newtheorem{assumption}[theorem]{Assumption}
\Crefname{assumption}{Assumption}{Assumptions}
\numberwithin{theorem}{section}
\numberwithin{equation}{section}
\numberwithin{table}{section}
\numberwithin{figure}{section}
\definecolor{myBlue}{RGB}{30,144,255} % dodger blue
\definecolor{myGreen}{RGB}{69,169,0} % chatreuse
\definecolor{myRed}{RGB}{165,12,42} 
\definecolor{myOrange}{RGB}{225,92,22} 
\definecolor{mycolor0}{rgb}{0.12156862745098,0.466666666666667,0.705882352941177} % orange
\definecolor{mycolor1}{rgb}{0.00000,0.44700,0.74100}% blau
\definecolor{mycolor2}{rgb}{0.85000,0.32500,0.09800}% rot/orange
\definecolor{mycolor3}{rgb}{0.49400,0.18400,0.55600}% lila
\definecolor{mycolor4}{rgb}{0.92900,0.69400,0.12500}% gelb
\definecolor{mycolor5}{rgb}{0.46600,0.67400,0.18800}% gruen
\definecolor{mycolor6}{rgb}{0.30100,0.74500,0.93300}% hellblau
\definecolor{mycolor7}{rgb}{0.63500,0.07800,0.18400}% pflaume
\newcommand{\delete}[1]{ }
\def\R{\mathbb{R}}
\DeclareMathOperator{\Diag}{Diag}
\newcommand{\calO}{\ensuremath{\mathcal{O}} }
\newcommand{\calS}{\ensuremath{\mathcal{S}} }
\newcommand{\f}{\ensuremath{f}}
\newcommand{\g}{\ensuremath{g}}
\newcommand{\Amat}{\mathbf{A}}
\newcommand{\Bmat}{\mathbf{B}}
\newcommand{\Cmat}{\mathbf{C}}
\newcommand{\Dmat}{\mathbf{D}}
\newcommand{\Imat}{\mathbf{I}}
\newcommand{\Smat}{\mathbf{S}}
\newcommand{\Tmat}{\mathbf{T}}
\newcommand{\tol}{\operatorname{TOL}}
\newcommand{\Cfg}{\ensuremath{C_\text{rhs}}}
\newcommand{\Ctau}{\ensuremath{{\Cmat_\tau}}}
\newcommand{\Ctauinv}{\ensuremath{{\Cmat^{-1}_\tau}}}
\newcommand{\BDF}{\ensuremath{\text{BDF}}}
\DeclareFontFamily{U}{matha}{\hyphenchar\font45}
\DeclareFontShape{U}{matha}{m}{n}{
	<-6> matha5 <6-7> matha6 <7-8> matha7
	<8-9> matha8 <9-10> matha9
	<10-12> matha10 <12-> matha12
}{}
\DeclareSymbolFont{matha}{U}{matha}{m}{n}
\DeclareFontFamily{U}{mathx}{\hyphenchar\font45}
\DeclareFontShape{U}{mathx}{m}{n}{
	<-6> mathx5 <6-7> mathx6 <7-8> mathx7
	<8-9> mathx8 <9-10> mathx9
	<10-12> mathx10 <12-> mathx12
}{}
\DeclareSymbolFont{mathx}{U}{mathx}{m}{n}
\DeclareMathDelimiter{\vvvert} {0}{matha}{"7E}{mathx}{"17}%
\DeclareMathOperator{\newton}{N}
\DeclareMathOperator{\meter}{m}
\DeclareMathOperator{\second}{s}
\DeclareMathOperator{\minute}{min}
\begin{document}
\title[Second-Order Iterative Time Integration for Linear Poroelasticity]{A Second-Order Iterative Time Integration Scheme\\ for Linear Poroelasticity}
\author[]{R.~Altmann$^\dagger$, M.~Deiml$^\ddagger$}
\address{${}^{\dagger}$ Institute of Analysis and Numerics, Otto von Guericke University Magdeburg, Universit\"atsplatz 2, 39106 Magdeburg, Germany}
\address{${}^{\ddagger}$ Institute of Mathematics, University of Augsburg, Universit\"atsstr.~12a, 86159 Augsburg, Germany}
\email{robert.altmann@ovgu.de, matthias.deiml@uni-a.de}
%\thanks{}
%
\date{\today}
\keywords{}
%
%
%=============================================================================
%=========  Abstract
%=============================================================================
\begin{abstract}
We propose a novel time stepping method for linear poroelasticity by extending a recent iterative decoupling approach to the second-order case. This results in a two-step scheme with an inner iteration and a relaxation step. We prove second-order convergence for a prescribed number of inner iteration steps, only depending on the coupling strength of the elastic and the flow equation. The efficiency of the scheme is illustrated by a number of numerical experiments, including a simulation of three-dimensional brain tissue. 
\end{abstract}
%
%% TODOs bzw. was man noch einfügen könnte:
%  
% - Beweis, dass Remark 4.3 zu beta=5 führt (muss nicht ins Paper aber sollte existieren)
% - Beweise checken, insb Ende Th. 4.5
%
% - numerische Experimente:
%   - a) fixed-stress second order einbauen
%   - b) omega variieren --> Plot omega/Laufzeit für vers Verfahren
%   - c) nicht-lineares Beispiel
%
%
%=============================================================================
%=========  Title / Contents
%=============================================================================
\maketitle
%\setcounter{tocdepth}{3}
%\tableofcontents
%
{\tiny {\bf Key words.} poroelasticity, time discretization, decoupling, higher order, iterative scheme}\\
\indent
{\tiny {\bf AMS subject classifications.}  {\bf 65M12}, {\bf 65M20}, {\bf 65L80}, {\bf 76S05}} 
%
%65M12 = Stability and convergence of numerical methods PDEs
%65M20 = Method of lines
%65L80 = Methods for differential-algebraic equations
%76S05 = Flows in porous media; filtration
%65M60 = Finite elements, Rayleigh-Ritz and Galerkin methods, finite methods
% 
%
%=============================================================================
%=========  Introduction
%=============================================================================
\section{Introduction}

% poroelasticity and applications
Poroelasticity is a popular model applied in a variety of fields such as in geomechanics for porous rocks~\cite{DetC93} or in medicine for biological tissue~\cite{JuCLT20}. The model was first introduced in \cite{Bio41} and later reformulated with the solid displacement and fluid pressure as variables~\cite{Sho00}, nowadays known as two-field formulation. This leads to an elliptic differential equation describing the balance of stress as well as a parabolic equation describing the conservation of mass, which together form an elliptic--parabolic system.

% spatial discretization
For the spatial discretization, a number of finite element methods have been considered, both conforming and non-conforming \cite{MurL92, PhiW07a, PhiW08, LeeMW17, HonK18}. Within this work, we assume an existing finite element discretization in space and focus on the resulting differential--algebraic equation. For the respective spatial error analysis, we refer to the aforementioned sources. We would also like to mention the possibility of space--time approaches as considered in \cite{BauRK17, ArfS22}. These, however, are not compatible with the proposed time steppe scheme.

% implicit time-descretizations
A purely explicit time-discretization of the poroelasticity equations lacks the necessary stability. As such, fully implicit schemes like the \textit{implicit Euler method} are typically used~\cite{ErnM09}. This approach admits unconditional stability, but due to the large coupled nature of the resulting linear system, does not scale well with finer grid sizes.
% decoupling time-descretizations
Decoupling schemes, in contrast, separate the two equations and solve them successively, often leading to a lower computational complexity, while enabling the use of standard preconditioners for the elasticity and the flow problem~\cite{LeeMW17}. Moreover, the resulting left-hand side matrices are symmetric positive definite such that more efficient linear solvers can be applied.
% iterative schemes
The decoupling is usually implemented as an inner iteration like in the \textit{fixed-strain}, \textit{fixed-stress}, \textit{drained}, and \textit{undrained} approaches \cite{KimTJ11a, KimTJ11b}. There one can observe analytically, that unconditional stability is only achieved through relaxation of the inner iteration. Otherwise, the approximation only converges under a certain \textit{weak coupling condition}.
% Semi-explicit euler
It was however found that in presence of this condition convergence is already achieved through a single inner iteration step, leading to the \textit{semi-explicit Euler scheme}~\cite{AltMU21}. This approach was later extended to construct a related second-order method based on the BDF-$2$ scheme~\cite{AltMU24}.
% Iterative semi-explicit method
More recently, it was analyzed, whether a small number of inner iterations would relax the restriction regarding the coupling strength~\cite{AltD24}. This leads to another iterative approach (similar to the undrained scheme but with a different kind of relaxation) and it was shown that for any coupling strength an appropriate number of inner iterations can be determined, which then guarantees first-order convergence.

% outline
In this paper, we extend the last-mentioned iterative approach to a second-order scheme; see Section~\ref{sec:methods}. This means that -- depending on the coupling strength -- the number of inner iteration steps is fixed a priori. With this, the scheme can be applied to any poroelastic problem. For the convergence analysis, which we present in Section~\ref{sec:convergence}, we pick up the strategy presented in~\cite{AltD24}. We prove convergence of order $7/4$ for the general case and order $2$ if an additional assumption on the initial data is satisfied. The latter, however, seems more like a technical assumption and does not occur in the numerical experiments of Section~\ref{sec:numerics}. Therein, we study the sharpness of the bound on the needed number of inner iterations and consider a three-dimensional real-world brain tissue example. 
%
%
%=============================================================================
%=========  Model Problem 
%=============================================================================
\section{Preliminaries}\label{sec:prelim}
In this preliminary section, we introduce the (semi-discrete) equations of linear poroelasticity, including the coupling strength~$\omega$ which plays a key role in this paper. A spatial discretization of the coupled system introduced in~\cite{Bio41,Sho00} leads to a system of the form 
\begin{align}
	\label{eq:semiDiscretePoro}
	\begin{bmatrix} 0 & 0 \\ \Dmat & \Cmat \end{bmatrix}
	\begin{bmatrix} \dot u \\ \dot p \end{bmatrix}
	= 
	\begin{bmatrix} -\Amat & \phantom{-}\Dmat^T \\ 0 & -\Bmat \end{bmatrix}
	\begin{bmatrix} u \\ p \end{bmatrix}
	+ 
	\begin{bmatrix} \f \\ \g \end{bmatrix}
\end{align}
with initial conditions 
\[
	u(0) = u^0, \qquad
	p(0) = p^0. 
\]
Due to the differential--algebraic structure of system~\eqref{eq:semiDiscretePoro}, the initial data needs to satisfy the consistency condition $\Amat u^0 = \f(0) + \Dmat^T p^0$; see~\cite{AltMU21c} for more details. The (homogeneous) boundary conditions of the PDE model, on the other hand, are assumed to be included in the spatial discretization. For more details, we exemplary mention~\cite{MurL92,PhiW07a,HonK18}; see also the references therein.
%
% meaning of matrices
In general, $\Amat\in \R^{n_u, n_u}$ is the stiffness matrix which describes the elasticity in the mechanical part of the problem, whereas $\Bmat\in \R^{n_p, n_p}$ equals the stiffness matrix which is related to the fluid flow. Moreover, $\Cmat\in \R^{n_p, n_p}$ is a mass matrix which scales with the Biot modulus describing the compressibility of the fluid under pressure. 
% general assumptions
In what follows, we are not restricted to one particular discretization and only consider a number of structural assumptions for the matrices. 
\begin{assumption}[Spatial discretization]
\label{ass:matrices}
The matrices $\Amat$, $\Bmat$, and $\Cmat$ are symmetric and positive definite. Moreover, $\Dmat\in \R^{n_p, n_u}$ is of full (row) rank. 
\end{assumption}
Given Assumption~\ref{ass:matrices}, we denote by $c_\Amat$, $c_\Bmat$, and $c_\Cmat$ the smallest eigenvalues of~$\Amat$, $\Bmat$, and $\Cmat$, respectively. Obviously, these values are positive and real. We further denote the spectral norms by
\[
	C_\Amat \coloneqq \|\Amat\|_2,\qquad
	C_\Bmat \coloneqq \|\Bmat\|_2,\qquad
	C_\Cmat \coloneqq \|\Cmat\|_2,\qquad
	C_\Dmat \coloneqq \max\{\|\Dmat\|_2, \|\Dmat^T\|_2\}.
\]
For $\Amat$, $\Bmat$, and $\Cmat$, these numbers coincide with their respective largest eigenvalues. These three matrices also induce norms, namely 
\[
	\|v\|_\Amat \coloneqq \sqrt{v^T\Amat v}, \qquad
	\|q\|_\Bmat \coloneqq \sqrt{q^T\Bmat q}, \qquad
	\|q\|_\Cmat \coloneqq \sqrt{q^T\Cmat q}.
\]
Moreover, we use the notation $\Ctau \coloneqq \Cmat + \tfrac23\tau\Bmat$ with the time step size $\tau$ being introduced later on. This again induces a norm, 
\[
	\|q\|_\Ctau \coloneqq \sqrt{q^T\Ctau q}.
\]
We are now interested in the operator norm of $\Dmat$ under the problem dependent norms $\|\bullet\|_\Amat$ and $\|\bullet\|_\Ctau$. This relation describes the coupling strength of the elastic and flow equation. 
\begin{definition}\label{def:coupling-strength}
The \textit{coupling strength} is defined as
\[ 
	\omega 
	\coloneqq \bigg(\inf_{q \ne 0} \frac{\|\Dmat q\|_\Amat}{\|q\|_\Ctau}\bigg)^2 
\]
and satisfies under Assumption~\ref{ass:matrices} the estimate
\[
	\omega 
	\le \frac{C_\Dmat^2}{c_\Amat(c_\Cmat + \tfrac23\tau c_\Bmat)} 
	\le \frac{C_\Dmat^2}{c_\Amat c_\Cmat}.
\]
\end{definition}

In the considered poroelastic setting, a conforming spatial discretization yields  
\[ 
	c_\Amat \le \mu, \qquad 
	c_\Cmat = 1/M, \qquad 
	C_\Dmat \le \alpha\sqrt{m} 
\]
with the Lam{\'e} coefficient~$\mu$, the Biot modulus~$M$, and the Biot--Willis fluid--solid coupling coefficient~$\alpha$. Using Definition~\ref{def:coupling-strength}, we obtain $\omega \le m\,\alpha^2M/\mu$. Following the calculations in~\cite{AltD24}, this bound can be improved to $\omega \le \alpha^2M/(\mu + \lambda)$ with $\lambda$ denoting the second Lam{\'e} coefficient. In this paper, we are interested in applications with moderate $\omega$. In geomechanics, for example, $\omega$ is often in the range of $1$. For a list of examples, we refer to~\cite{DetC93}; see also~\cite{AltMU24}. 

In preparation for the upcoming analysis, we introduce relevant matrices and corresponding norm estimates. 
\begin{lemma}[Matrix norm estimates]\label{lem:matrices}
Consider $0 < \gamma < 1$.Then the $(n_p \times n_p)$-matrices  
\begin{align*}
	\Ctau &\coloneqq \Cmat + \tfrac23\tau\Bmat, &
	\Tmat &\coloneqq -\Ctauinv\Dmat \Amat^{-1}\Dmat^T, \\
	\Smat &\coloneqq \gamma\, \Tmat + (1 - \gamma)\, \Imat, &
	\tilde \Smat &\coloneqq \Smat^{K-1} + \gamma \sum\nolimits_{k=0}^{K-2} \Smat^k, 
\end{align*}
where $\Imat$ denotes the identity matrix, satisfy the spectral bounds 
\begin{align*}
	\|\Ctauinv \Cmat x\|_\Ctau &\le \|x\|_\Ctau	&
	\|\Tmat x\|_\Ctau &\le \omega\, \|x\|_\Ctau, \\
	\|\Smat x\|_\Ctau &\le (1 - \gamma)\, \|x\|_\Ctau,	&
	\|\tilde \Smat x\|_\Ctau &\le \|x\|_\Ctau.
\end{align*}
\end{lemma}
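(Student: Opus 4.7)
The plan is to treat all four estimates in a unified way by observing that each of $\Ctauinv\Cmat$, $\Tmat$, $\Smat$, and $\tilde\Smat$ is self-adjoint with respect to the $\Ctau$-inner product $\langle x,y\rangle_\Ctau := y^T\Ctau x$. Any operator of the form $\Ctauinv S$ with $S$ symmetric satisfies $\langle \Ctauinv S x, y\rangle_\Ctau = y^T S x$, which is symmetric in $x,y$; this covers $\Ctauinv\Cmat$ and $\Tmat$ directly, and $\Smat$, $\tilde\Smat$ inherit the property as polynomials in $\Tmat$. For such self-adjoint operators the induced $\Ctau$-operator norm coincides with the spectral radius, and the relevant eigenvalues are the generalized eigenvalues of the pencil $(S,\Ctau)$.

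First I would dispatch the first two bounds via Rayleigh quotients. For $\Ctauinv\Cmat$ one obtains $\lambda = v^T\Cmat v / v^T\Ctau v \in (0,1]$, because $\Ctau-\Cmat = \tfrac{2}{3}\tau\Bmat$ is positive semidefinite. For $\Tmat$ one obtains $\mu = -v^T\Dmat\Amat^{-1}\Dmat^T v / v^T\Ctau v \in [-\omega,0]$, directly from Definition~\ref{def:coupling-strength}. Passing to spectral radii then yields the first two claimed norm bounds.

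Next, for $\Smat = \gamma\Tmat + (1-\gamma)\Imat$, the $\Ctau$-eigenvalues are $\gamma\mu + (1-\gamma) \in [1-\gamma(1+\omega),\, 1-\gamma]$. The upper endpoint is automatically at most $1-\gamma$; showing that the lower endpoint does not exceed $1-\gamma$ in magnitude reduces to a calibration condition relating $\gamma$ and $\omega$. Once this is in place, the bound on $\tilde\Smat$ follows from the bound on $\Smat$ by the triangle inequality and submultiplicativity of the $\Ctau$-operator norm, together with the telescoping identity
\[
  (1-\gamma)^{K-1} + \gamma\sum_{k=0}^{K-2}(1-\gamma)^k \;=\; (1-\gamma)^{K-1} + \bigl[1-(1-\gamma)^{K-1}\bigr] \;=\; 1.
\]

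The main obstacle I anticipate lies precisely in the bound on $\Smat$: when $\gamma(2+\omega) > 2$, the negative endpoint $1-\gamma(1+\omega)$ can exceed $1-\gamma$ in absolute value, so the clean estimate $\|\Smat\|_\Ctau \le 1-\gamma$ must rely on a mild standing assumption on the relaxation parameter (for instance $\gamma(2+\omega) \le 2$, or more conservatively $\gamma(1+\omega) \le 1$ so that the eigenvalues of $\Smat$ remain nonnegative). Granting this calibration, the telescoping identity above closes the estimate for $\tilde\Smat$ cleanly.
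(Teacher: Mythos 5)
Your argument is correct and follows essentially the same spectral route as the paper: eigenvalue localization in the $\Ctau$-geometry for $\Tmat$ and $\Smat$, and the identical telescoping geometric-series identity for $\tilde\Smat$. The one place you genuinely diverge is the first estimate: you read off $\|\Ctauinv\Cmat x\|_\Ctau\le\|x\|_\Ctau$ from self-adjointness of $\Ctauinv\Cmat$ in the $\Ctau$-inner product together with the Rayleigh quotient $v^T\Cmat v/v^T\Ctau v\in(0,1]$ (i.e.\ $\Cmat\preceq\Ctau$), whereas the paper expands $\|x\|_\Ctau^2=\|\Ctauinv\Cmat x\|_\Ctau^2+\tfrac43\tau\,x^T\Bmat\Ctauinv\Cmat x+\tfrac49\tau^2\|\Ctauinv\Bmat x\|_\Ctau^2$ and then verifies positive semi-definiteness of $\tau\Bmat\Ctauinv\Cmat$ by computing its inverse; your version is shorter and makes the mechanism more transparent. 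Your anticipated obstacle for $\Smat$ is not a defect of your proof but a genuine observation about the lemma: the spectrum of $\Smat$ is $[1-\gamma(1+\omega),\,1-\gamma]$, so $\rho(\Smat)\le 1-\gamma$ holds only if $\gamma(2+\omega)\le 2$, while the paper's proof silently asserts the spectrum lies in $[-(1-\gamma),1-\gamma]$ for every $0<\gamma<1$. Since the scheme always takes $\gamma=2/(2+\omega)$, for which the lower endpoint equals $-(1-\gamma)$ exactly, the lemma is correct as actually used, but as stated it needs the calibration hypothesis you identify.
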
 
\begin{proof}
% \Ctauinv\Cmat
To see $\|\Ctauinv \Cmat x\|_\Ctau \le \|x\|_\Ctau$, we first observe that 
\begin{align*}
	\|x\|_\Ctau^2
	= \|\Ctauinv\Ctau x\|_\Ctau^2 
	&= \|\mathbf{C}_\tau^{-1/2}(\Cmat + \tfrac23\tau \Bmat)x\|^2 \\
	&= \|\mathbf{C}_\tau^{-1/2}\Cmat x\|^2 + 2x^T(\tfrac23\tau \Bmat)\Ctauinv\Cmat x + \tfrac49\tau^2\, \|\mathbf{C}_\tau^{-1/2}\Bmat x\|^2 \\
	&= \|\Ctauinv\Cmat x\|_\Ctau^2 + \tfrac43\tau\, x^T\Bmat\Ctauinv\Cmat x + \tfrac49\tau^2\, \|\Ctauinv\Bmat x\|_\Ctau^2.
\end{align*}
Note that the last summand is non-negative. Hence, it remains to show that $\tau\Bmat\Ctauinv\Cmat$ is positive semi-definite. Indeed, we have 
\[ 
	\big( \tau\Bmat\Ctauinv\Cmat \big)^{-1} 
	= \Cmat^{-1}\big(\Cmat + \tfrac23\tau\Bmat \big)(\tau\Bmat)^{-1} 
	= (\tau\Bmat)^{-1} + \tfrac23\,\Cmat^{-1},  
\]
which is even positive definite, since the inverse as well as the sum of positive matrices are positive. 
% \Tmat
For the estimate including $\Tmat$, we use Definition~\ref{def:coupling-strength}, which yields for the spectral radius $\rho(\Tmat) \le \omega$. This, in turn, implies the stated estimate.  
% \Smat
Since the product of symmetric positive definite matrices stays positive, the eigenvalues of $\Tmat$ are negative. Hence, the spectrum of~$\gamma\, \Tmat$ lies in the interval $[-\gamma\omega,0]$. The matrix $\Smat$ shifts this by $1 - \gamma$ in positive direction, leading to a spectrum in $[-(1-\gamma),1-\gamma]$. This implies $\rho(\Smat)\le 1-\gamma$. 
% \tilde S
Finally, to estimate the norm of $\tilde \Smat$ we use
\[ 
	\|\tilde \Smat x\|_\Ctau 
	\le \Big((1 - \gamma)^{K-1} + \gamma\, \sum\nolimits_{k=0}^{K-2} (1 - \gamma)^k\Big)\|x\|_\Ctau. 
\]
Applying the geometric series, we observe that the term in parenthesis equals 
\[ 
	(1 - \gamma)^{K-1} + \gamma\, \sum\nolimits_{k=0}^{K-2} (1 - \gamma)^k 
	= (1 - \gamma)^{K-1} + \gamma\, \frac{1 - (1 - \gamma)^{K-1}}{1 - (1 - \gamma)} 
	= 1. \qedhere
\]
\end{proof}
%
%\begin{remark}
%The matrix $\tilde \Smat$ can also be written as~$\tilde \Smat = (\gamma\Tmat + \mathbf{I})^{K-1}$. As such, it is positive semi-definite if $K$ is odd. 
%\end{remark}
%
%
%=============================================================================
%=========  Numerical Schemes
%=============================================================================
\section{Time Stepping Schemes of Second Order}\label{sec:methods}
We consider an equidistant decomposition of the time interval~$[0,T]$ with step size~$\tau$, resulting in discrete time points~$t^n = n\tau$ for $0 \le n \le N$. Moreover, we recall the notion $\Ctau = \Cmat + \tfrac23\tau\Bmat$, which will be used throughout this section. 
Approximations of~$u(t^n)$ and~$p(t^n)$ are denoted by~$u^n$ and $p^n$, respectively. In the same manner, we write~$\f^{n} \coloneqq \f(t^n) \in \R^{n_u}$ and~$\g^{n} \coloneqq \g(t^n) \in \R^{n_p}$ for the point evaluations of the right-hand sides. 
%
%
%=============================================================================
\subsection{Implicit discretization of second order}
A direct application of BDF-$2$ to the semi-discrete system~\eqref{eq:semiDiscretePoro} yields the two-step scheme  
\begin{align}
\label{eq:BDFimplicit}	
	\begin{bmatrix} \Amat & -\Dmat^T \\ 3 \Dmat & 3\Ctau \end{bmatrix}
	\begin{bmatrix} u^{n+2} \\ p^{n+2} \end{bmatrix}
	=
	\begin{bmatrix} f^{n+2} \\ 2\tau g^{n+2} + \Dmat(4u^{n+1} - u^n) + \Cmat\,(4p^{n+1} - p^n) \end{bmatrix}.
\end{align}
Given Assumption~\ref{ass:matrices}, this scheme is well-posed as the leading matrix on the left-hand side is invertible. Moreover, it is second-order convergent as shown in~\cite[Th.~5.24]{KunM06}. 
This fully implicit discretization, however, suffers from the computational effort needed in each time step. Especially for three-dimensional applications, the resulting linear system is quite challenging due to the actual size of the system and the lack of appropriate preconditioners~\cite{LeeMW17}. Hence, we aim for decoupled schemes which allow the application of well-established preconditioners for the matrices~$\Amat$, $\Bmat$, and $\Cmat$. 
%
%
%=============================================================================
\subsection{Semi-explicit discretization of second order}
One approach for the decoupling of the system is presented in~\cite{AltMU24} and reads 
\begin{align}
\label{eq:BDFsemiExpl} 
	\begin{bmatrix} \Amat & \\ 3 \Dmat & 3 \Ctau \end{bmatrix}
	\begin{bmatrix} u^{n+2} \\ p^{n+2} \end{bmatrix}
	=
	\begin{bmatrix} f^{n+2} + \Dmat^T (2p^{n+1} - p^n) \\ 2\tau g^{n+2} + \Dmat(4u^{n+1} - u^n) + \Cmat\,(4p^{n+1} - p^n) \end{bmatrix}.
\end{align}
%
%\begin{subequations} 
%\label{eq:second-order-semi}
%\begin{align}
%	\Amat u^{n+2} - \Dmat^T (2p^{n+1} - p^n) 
%	&= f^{n+2}, \\
%	\Dmat(3u^{n+2} - 4u^{n+1} + u^n) + \Cmat\,(3p^{n+2} - 4p^{n+1} + p^n) + 2\tau \Bmat p^{n+2} 
%	&= 2\tau g^{n+2}
%\end{align}
%\end{subequations}
%
This scheme results from applying the BDF-$2$ scheme to the delay system
\begin{align*}
	\Amat u - \Dmat^T(2\Delta_\tau p - \Delta_{2\tau} p) 
	&= f, \\
	\Dmat\dot{u} + \Cmat\dot{p} + \Bmat p 
	&= g,  
\end{align*}
where $\Delta_\tau p(t) \coloneqq p(t-\tau)$. Therein, the delay term is motivated by applying discrete derivatives to the Taylor expansion~$p(t) \approx \Delta_\tau p + \tau\Delta_\tau \dot{p}$. Scheme~\eqref{eq:BDFsemiExpl} has the advantage that the two equations can be solved sequentially, i.e., we first solve for~$u^{n+2}$ (by inversion of~$\Amat$) and then for~$p^{n+2}$ (by inversion of~$\Ctau$). We would like to emphasize that there exist well-established preconditioners for both subsystems. 

On the other hand, convergence is only guaranteed for~$\omega < 1/3$; see~\cite{AltMU24}. For larger coupling constants, the scheme gets unstable. Aim of this paper is to include an inner iteration which loosens up the restrictive restriction on the coupling strength. 
%
%\begin{subequations} \label{eq:approach}
%\begin{align}
%	\Amat u - \Dmat^T(\Delta_\tau p + \tau\Delta_\tau \dot{p}) 
%	&= f, \\
%	\Dmat \dot{u} + \Cmat\dot{p} + \Bmat p 
%	&= g.
%\end{align}
%\end{subequations}
%
%
%=============================================================================
\subsection{Second-order fixed stress}\label{sec:methods:fixedstress}
The fixed stress scheme~\cite{KimTJ11b, WheG07} is one of the most well-known iterative methods for poroelasticity. It is based on a fixed-point iteration which is motivated by physics and provides (in its original form) a first-order scheme as long as the number iteration steps is sufficiently large. In~\cite{AltMU23ppt}, this scheme was extended to higher order.  
% construction 
Based on the fixed-point equation 
\begin{align*}
	\Dmat ({\dot u}_{k+1} - {\dot u}_{k})
	= L ({\dot p}_{k+1} - {\dot p}_{k}) 
\end{align*}
with some stabilization parameter~$L > 0$ and subscript $k$ denoting the (inner) iteration number, system~\eqref{eq:semiDiscretePoro} decouples into  
\begin{align} \label{eq:fixed-stress}
	\begin{bmatrix} 0 & 0 \\ 0 & \Cmat + L\Imat \end{bmatrix}
	\begin{bmatrix} {\dot u}_{k+1} \\ {\dot p}_{k+1} \end{bmatrix}
	= 
	\begin{bmatrix} 0 & 0 \\ -\Dmat & L\Imat \end{bmatrix}
	\begin{bmatrix} {\dot u}_{k} \\ {\dot p}_{k} \end{bmatrix}
	+
	\begin{bmatrix} -\Amat & \phantom{-}\Dmat^T \\ 0 & -\Bmat \end{bmatrix}
	\begin{bmatrix} u_{k+1} \\ p_{k+1} \end{bmatrix}
	+ 
	\begin{bmatrix} \f \\ \g \end{bmatrix}.
\end{align}
Now, BDF-$2$ is applied, leading to the following scheme: given approximations~$(u^{n}, p^{n})$, $(u^{n+1}, p^{n+1})$ and initial guesses~$u^{n+2}_0$, $p^{n+2}_0$ (e.g.~given by the approximation at the previous time step), compute 
\begin{align*}
	&\begin{bmatrix} \Amat & -\Dmat^T \\ 0 & 3\Ctau + 3 L\Imat \end{bmatrix}
	\begin{bmatrix} u^{n+2}_{k+1} \\ p^{n+2}_{k+1} \end{bmatrix}\\
	&\hspace{1.8cm}= 
	\begin{bmatrix} 0 & 0 \\ -3 \Dmat & 3 L\Imat \end{bmatrix}
	\begin{bmatrix} u^{n+2}_{k} \\ p^{n+2}_{k} \end{bmatrix}
	+
	\begin{bmatrix} 0 & 0 \\ 4\Dmat & 4\Cmat \end{bmatrix}
	\begin{bmatrix} u^{n+1} \\ p^{n+1} \end{bmatrix}
	-
	\begin{bmatrix} 0 & 0 \\ \Dmat & \Cmat \end{bmatrix}
	\begin{bmatrix} u^{n} \\ p^{n} \end{bmatrix}
	+ 
	\begin{bmatrix} \f^{n+2} \\ 2\tau\g^{n+2} \end{bmatrix}.
\end{align*}
Note that this again decouples the system. As stopping criterion, one considers the difference of consecutive iterates, i.e., the inner iteration is stopped if 
% !! im paper (kontinuierlich) nutzt man die $\V$ und $\cHV$ Norm 
%
\[
	\Vert u^{n+2}_{k+1} - u^{n+2}_{k}\Vert^{2}_{\Amat} + \Vert p^{n+2}_{k+1} - u^{n+2}_{k} \Vert^{2}_{\Cmat}
	\le \tol^{2}
\]
for a prescribed tolerance~$\tol$. The introduced iteration is contractive, which guarantees its convergence. Following the convergence result of~\cite{AltMU23ppt}, the tolerance has to be chosen as $\tol \lesssim C\,\tau^3$ in order to obtain a second-order time stepping scheme. 

Although such a stopping criterion quantifies the number of needed iteration steps per time step, this number may still be large and is not known a priori. In the following, we want to construct an iterative scheme with a fixed number of inner iteration steps. 
%
%
%=============================================================================
\subsection{A first iterative approach}
An application of the matrix splitting 
\begin{align}
\label{eq:matrixSplitting}
	\begin{bmatrix} \Amat & -\Dmat^T \\ 3 \Dmat & 3 \Ctau \end{bmatrix}
	= \begin{bmatrix} \Amat & 0 \\ 3 \Dmat & 3 \Ctau \end{bmatrix}
	- \begin{bmatrix} 0 & \Dmat^T \\ 0 & 0 \end{bmatrix}
\end{align}
to the BDF discretization~\eqref{eq:BDFimplicit} would lead to the iterative scheme
\begin{equation*}
	\begin{bmatrix} \Amat & 0 \\ 3 \Dmat & 3 \Ctau \end{bmatrix}
	\begin{bmatrix} u_{k+1}^{n+2} \\ p_{k+1}^{n+2} \end{bmatrix}
	=
	\begin{bmatrix} 0 & \Dmat^T \\ 0 & 0 \end{bmatrix}
	\begin{bmatrix} u_{k}^{n+2} \\ p_{k}^{n+2} \end{bmatrix}
	+
	\begin{bmatrix} f^{n+2} \\ 2\tau g^{n+2} + \Dmat(4u^{n+1} - u^n) + \Cmat\,(4p^{n+1} - p^n) \end{bmatrix},
\end{equation*}
where we initialize $u_{0}^{n+2} = u^{n+1}$ and $p_{0}^{n+2} = p^{n+1}$. For sufficiently small~$\omega$, one can easily show that this converges to the BDF-$2$ solution as $K\to\infty$. For $K=1$, on the other hand, this only gives a first-order scheme. This can be expected, since the adaptation in the first equation may be interpreted as a discretization of a first-order delay approximation. 

For increasing~$K$, one can observe second-order convergence numerically for larger values of~$\tau$, which then reduces to first order for smaller step sizes. Hence, we expect second-order convergence only in the regime where $K$ scales with~$\tau$. 
% K ~ 1/log(tau) ??
We, however, are interested in an iterative scheme, which is second-order convergent for fixed $K$. For this, we consider a reformulation of~\eqref{eq:BDFimplicit} in terms of increments. 
%
%
%=============================================================================
\subsection{A matrix splitting for the increments} \label{sec:splitting-relaxation}
We rewrite~\eqref{eq:BDFsemiExpl} as 
\begin{equation*}
	\begin{bmatrix} \Amat & -\Dmat^T \\ 3 \Dmat & 3 \Ctau \end{bmatrix}
	\begin{bmatrix} \delta_{u}^{n+2} \\ \delta_{p}^{n+2} \end{bmatrix}
	=
	\begin{bmatrix} f^{n+2} - \Amat u^{n+1} + \Dmat^T p^{n+1} \\ 2\tau g^{n+2} - 2\tau \Bmat p^{n+1} + \Dmat(u^{n+1} - u^n) + \Cmat\,(p^{n+1} - p^n) \end{bmatrix}.
\end{equation*}
where $\delta_u^{n+2} \coloneqq u^{n+2} - u^{n+1}$ and $\delta_p^{n+2} \coloneqq p^{n+2} - p^{n+1}$. The application of the matrix splitting~\eqref{eq:matrixSplitting} leads to the iterative scheme
\begin{align}
	&\begin{bmatrix} \Amat & 0 \\ 3 \Dmat & 3 \Ctau \end{bmatrix}
	\begin{bmatrix} \delta_{u,k+1}^{n+2} \\ \delta_{p,k+1}^{n+2} \end{bmatrix} \notag \\
	&\hspace{1.6cm}=
	\begin{bmatrix} 0 & \Dmat^T \\ 0 & 0 \end{bmatrix}
	\begin{bmatrix} \delta_{u,k}^{n+2} \\ \delta_{p,k}^{n+2} \end{bmatrix} 
	+
	\begin{bmatrix} f^{n+2} - \Amat u^{n+1} + \Dmat^T p^{n+1} \\ 2\tau g^{n+2} - 2\tau \Bmat p^{n+1} + \Dmat(u^{n+1} - u^n) + \Cmat\,(p^{n+1} - p^n) \end{bmatrix}, \label{eq:scheme-delta}
\end{align}
where we initialize $\delta_{u,0}^{n+2} = u^{n+1} - u^n$ and $\delta_{p,0}^{n+2} = p^{n+1} - p^n$, and the approximations are given by $u^{n+2} \coloneqq u^{n+1} + \delta_{u,K}^{n+2}$ and $p^{n+2} \coloneqq p^{n+1} + \delta_{p,K}^{n+2}$.

Following the standard theory of iterative splitting schemes, this converges to the implicit solution for $K \to \infty$ if the spectral radius
\[
	\rho\left(
	\begin{bmatrix} \Amat & 0 \\ 3 \Dmat & 3 \Ctau \end{bmatrix}^{-1}
	\begin{bmatrix} 0 & \Dmat^T \\ 0 & 0 \end{bmatrix}
	\right)
	\le
	\rho(\Ctauinv\Dmat\Amat^{-1}\Dmat^T)
	\le
	\omega
\]
is less than $1$. For stronger coupling we can employ relaxation, for example for $\gamma > 0$ initialize
%
%\begin{subequations}
\begin{equation*}
	\delta_{u,0}^{n+2} 
	= u^{n+1} - u^n, \qquad 
	\delta_{p,0}^{n+2} 
	= p^{n+1} - p^n,
\end{equation*}
then iterate for $0 \le k \le K-1$
\begin{align*}
	&\begin{bmatrix} \Amat & 0 \\ 3 \Dmat & 3 \Ctau \end{bmatrix}
	\begin{bmatrix} \hat{\delta}_{u,k+1}^{n+2} \\ \hat{\delta}_{p,k+1}^{n+2} \end{bmatrix} \\
	&\hspace{1.2cm}=
	\begin{bmatrix} 0 & \Dmat^T \\ 0 & 0 \end{bmatrix}
	\begin{bmatrix} \delta_{u,k}^{n+2} \\ \delta_{p,k}^{n+2} \end{bmatrix} 
	+
	\begin{bmatrix} f^{n+2} - \Amat u^{n+1} + \Dmat^T p^{n+1} \\ 2\tau g^{n+2} - 2\tau \Bmat p^{n+1} + \Dmat(u^{n+1} - u^n) + \Cmat\,(p^{n+1} - p^n) \end{bmatrix},
\end{align*}
where except in the last step we dampen the pressure using
\begin{equation*}
	\delta_{u,k+1}^{n+2} = \hat{\delta}_{u,k+1}^{n+2}, \qquad \delta_{p,k+1}^{n+2} = \gamma \hat{\delta}_{p,k+1}^{n+2} + (1 - \gamma) \delta_{p,k}^{n+2},
\end{equation*}
and the approximation is given by
\begin{equation*}
	u^{n+2} \coloneqq u^{n+1} + \hat{\delta}_{u,K}^{n+2}, \qquad p^{n+2} \coloneqq p^{n+1} + \hat{\delta}_{p,K}^{n+2}.
\end{equation*}
%
%\end{subequations}
It can be seen that given only the coupling parameter the optimal choice for $\gamma$ is $\gamma \coloneqq 2/(2+\omega)$. For this choice we indeed get a convergence with the factor $\omega/(2+\omega) < 1$ in other words the scheme converges to the actual solution for $K \to \infty$.
%
%
%=============================================================================
\subsection{Novel iterative second-order scheme}
The scheme from the previous subsection can be written equivalently without the $\delta$ variables. Assume consistent initial data $p^0$, $u^0$ as well as $p^1$, $u^1$. For $n \ge 0$, we set the initial values of the inner iteration as  
\begin{subequations} 
\label{eq:scheme}
\begin{equation}
\label{eq:scheme:init}
	u_0^{n+2} 
	\coloneqq 2 u^{n+1} - u^n, \qquad
	p_0^{n+2} 
	\coloneqq 2 p^{n+1} - p^n.
\end{equation}
Then, for $0 \le k \le K-1$ solve the decoupled system 
\begin{equation}
\label{eq:scheme:hat}
	\begin{bmatrix} \Amat & 0 \\ 3 \Dmat & 3 \Ctau \end{bmatrix}
	\begin{bmatrix} \hat{u}_{k+1}^{n+2} \\ \hat{p}_{k+1}^{n+2} \end{bmatrix}
	=
	\begin{bmatrix} 0 & \Dmat^T \\ 0 & 0 \end{bmatrix}
	\begin{bmatrix} u_k^{n+2} \\ p_k^{n+2} \end{bmatrix}
	+
	\begin{bmatrix} f^{n+2} \\ 2\tau g^{n+2} + \Dmat(4 u^{n+1} - u^n) + \Cmat\,(4 p^{n+1} - p^n) \end{bmatrix}
\end{equation}
and, except for the last step, i.e., for $0 \le k \le K-2$, damp the pressure variable using
\begin{equation}
\label{eq:scheme:relaxation}
	u_{k+1}^{n+2} 
	\coloneqq \hat{u}_{k+1}^{n+2}, \qquad 
	p_{k+1}^{n+2} 
	\coloneqq \gamma\, \hat{p}_{k+1}^{n+2} + (1 - \gamma)\, p_k^{n+2} \qquad
	\text{with }\gamma = \tfrac{2}{2+\omega}.
\end{equation}
The final approximation is given by
\begin{equation}
\label{eq:scheme:final}
	u^{n+2} 
	\coloneqq \hat u^{n+2}_K, \qquad 
	p^{n+2} 
	\coloneqq \hat p^{n+1}_K.
\end{equation}
\end{subequations}
%
%Note that this differs from the first-order scheme presented in~\cite{AltD24} by the use of the BDF-$2$ scheme in the flow equation and the choice of the initial approximation $p_0^{n+2}$.
%
\begin{remark}
For $K=1$, this schemes reduces to the semi-explicit scheme of second order~\eqref{eq:BDFsemiExpl}.
\end{remark}
%
%
%=============================================================================
%=========  Convergence Analysis
%=============================================================================
\section{Convergence Analysis}\label{sec:convergence}
For the convergence proof of the newly introduced scheme~\eqref{eq:scheme}, we assume given (and exact) initial data $p^0$, $u^0$ as well as $p^1$, $u^1$, since we consider a $2$-step scheme. For both pairs, we assume consistency, i.e., 
\begin{align}
\label{eq:initConsistency}
	\Amat u^0 - \Dmat^T p^0 = \f^0, \qquad 
	\Amat u^1 - \Dmat^T p^1 = \f^1.
\end{align}
Moreover, we assume the existence of a positive constant~$C_\text{init}>0$ such that 
\begin{align}
\label{eq:initStability}
	\big\| p^1 - p(\tau) \big\|_\Ctau
%	= \|e_\text{init}\|_\Ctau
	\le C_\text{init} \tau^2. 
\end{align} 
Note that both assumptions can be easily matched, e.g., by one step of the implicit Euler scheme, which is locally of order two. 
\begin{proposition}
\label{prop:convergence-splitting}
Consider Assumption~\ref{ass:matrices} and let the right-hand sides of the semi-discrete problem~\eqref{eq:semiDiscretePoro} satisfy the smoothness conditions~$f \in C^3([0,T], \R^{n_u})$ and~$g \in C^2([0,T], \R^{n_p})$. Further assume initial data satisfying~\eqref{eq:initConsistency} and \eqref{eq:initStability} as well as~$K$ large enough such that 
\begin{equation}
\label{eq:iterationBound}
	3\,\omega^K < (2+\omega)^{K-1}.
\end{equation}
Then the iterates of the scheme~\eqref{eq:scheme} are stable in the sense that, for $n\ge 0$, 
\begin{subequations}
\begin{align}
	\big\| p^{n+2} - p_{K-1}^{n+2} \big\|_\Ctau 
	&\le C_1\,\tau^2, \label{eq:convergence-splitting:stability1} \\
	\sum_{k=1}^n \big\| (p^{k+2} - p_{K-1}^{k+2}) - (p^{k+1} - p_{K-1}^{k+1}) \big\|_\Ctau^2 
	&\le C_2\, \tau^4, \label{eq:convergence-splitting:stability2}
\end{align}
\end{subequations}
where $C_1$, $C_2$ are positive constants that only depend on the $C^3([0,T], \R^{n_u})$-norm of $f$, the $C^2([0,T], \R^{n_p})$-norm of~$g$, the time horizon~$T$, and the coupling parameter $\omega$. 
\end{proposition}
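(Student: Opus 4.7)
The plan is to recast everything in terms of the iteration operators $\Smat$ and $\Tmat$ from Lemma~\ref{lem:matrices} and then run an induction in $n$. To this end, I first introduce the fixed point $p^\star_{n+2}$ of the inner iteration at time $t^{n+2}$; a direct computation, using $\hat u_{k+1}^{n+2}=\Amat^{-1}(\Dmat^T p_k^{n+2}+f^{n+2})$ to eliminate the displacement in~\eqref{eq:scheme:hat}, shows that the operators of Lemma~\ref{lem:matrices} govern the iteration
\[
p_k^{n+2}-p^\star_{n+2}=\Smat^{k}\bigl(p_0^{n+2}-p^\star_{n+2}\bigr),\qquad \hat p_{k+1}^{n+2}-p^\star_{n+2}=\Tmat\bigl(p_k^{n+2}-p^\star_{n+2}\bigr),
\]
and that this fixed point coincides with the implicit BDF-$2$ pressure from~\eqref{eq:BDFimplicit} built on $u^{n+1},u^n,p^{n+1},p^n$. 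Subtracting the two displays yields the core identity
\[
p^{n+2}-p_{K-1}^{n+2}=(\Tmat-\Imat)\Smat^{K-1}\bigl(p_0^{n+2}-p^\star_{n+2}\bigr),
\]
which, by Lemma~\ref{lem:matrices} and $1-\gamma=\omega/(2+\omega)$, is bounded in $\|\cdot\|_\Ctau$ by $(1+\omega)\bigl(\omega/(2+\omega)\bigr)^{K-1}\|p_0^{n+2}-p^\star_{n+2}\|_\Ctau$. Condition~\eqref{eq:iterationBound} is exactly the requirement that the related amplification $\omega\bigl(\omega/(2+\omega)\bigr)^{K-1}$ stays strictly below $1/3$, which is the contraction I plan to exploit for the induction.

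For~\eqref{eq:convergence-splitting:stability1} I would then estimate $\|p_0^{n+2}-p^\star_{n+2}\|_\Ctau=O(\tau^2)$. Writing the Taylor expansion $2p(t^{n+1})-p(t^n)=p(t^{n+2})-\tau^2\ddot p(t^{n+1})+O(\tau^3)$, where the $C^3$/$C^2$ regularity of $f,g$ and the algebraic relation $\Amat u=f+\Dmat^T p$ give three derivatives of $p$, yields $\|p_0^{n+2}-p(t^{n+2})\|_\Ctau=O(\tau^2)$ up to linear contributions of the pressure errors at steps $n$ and $n+1$. The implicit BDF-$2$ fixed point $p^\star_{n+2}$ enjoys an analogous second-order local consistency and has uniformly bounded sensitivity with respect to the data $u^{n+1},u^n,p^{n+1},p^n$ via the invertible block matrix of~\eqref{eq:BDFimplicit} under Assumption~\ref{ass:matrices}. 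Combining both, $\|p_0^{n+2}-p^\star_{n+2}\|_\Ctau=O(\tau^2)$ plus linearly propagated errors from steps $n$, $n+1$; the contractive prefactor from the first paragraph together with \eqref{eq:initConsistency} and~\eqref{eq:initStability} then absorbs the propagation and closes the induction uniformly for $n\le T/\tau$.

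For the summable estimate~\eqref{eq:convergence-splitting:stability2}, I would apply the core identity to the telescoping differences, obtaining
\[
(p^{k+2}-p_{K-1}^{k+2})-(p^{k+1}-p_{K-1}^{k+1})=(\Tmat-\Imat)\Smat^{K-1}\bigl[(p_0^{k+2}-p^\star_{k+2})-(p_0^{k+1}-p^\star_{k+1})\bigr].
\]
The key observation is that the leading second-order remainder $\tau^2\ddot p$ appears at both time levels, so its consecutive difference gains one extra factor of $\tau$, leaving $(p_0^{k+2}-p^\star_{k+2})-(p_0^{k+1}-p^\star_{k+1})=O(\tau^3)$ modulo increments of the already second-order propagated errors. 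Squaring and summing over $k=1,\dots,n\le T/\tau$ produces $O(\tau^5)\le C_2\tau^4$, as required.

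The main obstacle will be the bookkeeping of the induction: $p^\star_{n+2}$ depends linearly on both $p^{n+1},p^n$ and $u^{n+1},u^n$, while the displacement errors are coupled back to the pressure errors through $\Dmat^T$, so the induction has to track both fields simultaneously. The discrete Gronwall argument must be tuned so that the contraction supplied by condition~\eqref{eq:iterationBound} dominates the linear propagation constants uniformly in $n$, with the final constants $C_1,C_2$ depending only on $T$, the smoothness norms of $f,g$, and $\omega$. This is essentially the second-order analogue of the bootstrap carried out in~\cite{AltD24} for the first-order scheme.
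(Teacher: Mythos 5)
Your algebraic setup is sound and in fact equivalent to the paper's: the fixed point $p^\star_{n+2}$ of the inner iteration relates to the paper's key quantity $\zeta_p^{n+2}=\hat p_1^{n+2}-p_0^{n+2}$ via $\zeta_p^{n+2}=(\Tmat-\Imat)(p_0^{n+2}-p^\star_{n+2})$, and your core identity matches the paper's $p^{n+2}-p_{K-1}^{n+2}=\Smat^{K-1}\zeta_p^{n+2}$. The gap is in the induction that is supposed to give $\|p_0^{n+2}-p^\star_{n+2}\|_\Ctau\le C\tau^2$ uniformly in $n$. Both $p_0^{n+2}=2p^{n+1}-p^n$ and $p^\star_{n+2}$ depend on the computed data $p^{n+1},p^n,u^{n+1},u^n$ with $O(1)$ coefficients, so the ``linearly propagated errors from steps $n$, $n+1$'' are not small compared with the $O(\tau^2)$ consistency term, and the per-step damping factor $(1+\omega)\,(\omega/(2+\omega))^{K-1}$ you propose to absorb them with is not even less than one in general (for $K=1$ and $\omega$ just below $1/3$ it equals $1+\omega>1$). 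Condition~\eqref{eq:iterationBound} does say $\omega(\omega/(2+\omega))^{K-1}<1/3$, but the reason $1/3$ is the right threshold is not the single-step map you wrote down: it is that the deviation satisfies a genuine three-term recursion across time levels, $\zeta_p^{n+2}=\Tmat\Smat^{K-1}\bigl(2\zeta_p^{n+1}-\tfrac53\zeta_p^{n}+\tfrac13\zeta_p^{n-1}\bigr)+\tfrac13\zeta_p^{n+1}+\dots$, whose companion matrix $\calS$ must be shown to satisfy $\|\calS\|_\Ctau<1$ by an explicit eigenvalue computation (Lemma~\ref{lem:recursion-matrix}). Equally essential, and absent from your sketch, is the cancellation that makes the inhomogeneity of this recursion harmless: the previously computed iterates enter only through $\tau\Bmat(p^{\ell+1}-p^{\ell})$, i.e.\ through \emph{differences} of consecutive pressures premultiplied by $\tau$, which is what permits a discrete Gr\"onwall bound on $\max_\ell\|p^{\ell+1}-p^\ell\|_\Ctau$ and closes the loop without circular reference to the global error. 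Without deriving that recursion and that cancellation, the induction as described does not close.

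For~\eqref{eq:convergence-splitting:stability2} your reasoning proves too much: if the telescoped deviation really were $O(\tau^3)$ uniformly in $k$, summing squares over $k\le T/\tau$ would give $O(\tau^5)$, which the paper shows (Remark~\ref{rem:additional-assumption}) requires \emph{additional} hypotheses on the initial data beyond~\eqref{eq:initStability}. Under the stated assumptions the first differences $\phi_p^2$ and $\phi_p^3$ are only $O(\tau^2)$, and the correct mechanism for the $O(\tau^4)$ bound is that these initial contributions decay geometrically under the same companion matrix, $\|\phi_p^{k+2}\|_\Ctau\le C\tau^2\|\calS\|_\Ctau^{k}+O(\tau^3)$, so that the geometric series of their squares yields $\tau^4$. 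Your uniform gain of one power of $\tau$ fails exactly at those first steps.
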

\begin{remark}
For $K=1$, where the scheme reduces to the semi-explicit scheme of second order, condition~\eqref{eq:iterationBound} reduces to $\omega < \frac13$, which is exactly the stability condition (numerically) observed in~\cite{AltMU24}.
\end{remark}
\begin{proof}[Proof of Proposition~\ref{prop:convergence-splitting}]
Consider the differences appearing in the first step of the inner iteration, namely 
\begin{align*} 
	\zeta_u^{n+2} 
	&\coloneqq \hat{u}^{n+2}_1 - u^{n+2}_0
	= \hat{u}^{n+2}_1 - 2 u^{n+1} + u^n, \\
	\zeta_p^{n+2} 
	&\coloneqq \hat{p}^{n+2}_1 - p^{n+2}_0
	= \hat{p}^{n+2}_1 - 2 p^{n+1} + p^n 
\end{align*}
for $n \ge 0$. These quantities are useful, since they satisfy -- as we will show in this proof -- the linear dependence
\begin{equation} 
\label{eq:convergence-splitting:linear}
	\hat{p}_{k+1}^{n+2} - p^{n+2}_k 
	= \Smat^k\zeta_p^{n+2}
\end{equation}
with the matrix~$\Smat$ from Lemma~\ref{lem:matrices} for $n\ge 0$ and $0 \le k \le K-1$. To see this, observe that by~\eqref{eq:scheme:hat} we get 
\[
	\hat{p}_k^{n+2} 
	= \Tmat p_{k-1}^{n+2} + r^{n+2}, 
\]
where
\[ 
	r^{n+2} 
	= \tfrac13\, \Ctauinv \big(2\tau g^{n+2} + \Dmat (4u^{n+1} - u^n) + \Cmat (4p^{n+1} - p^n) - 3\Dmat\Amat^{-1} f^{n+2} \big). 
\]
Combined with~\eqref{eq:scheme:relaxation}, this shows for all $1 \le k \le K-1$,  
\begin{equation} 
\label{eq:convergence-splitting:linear1}
	p_k^{n+2} 
	= \Smat p_{k-1}^{n+2} + \gamma\, r^{n+2}.
\end{equation}
On the other hand, we observe that 
\begin{align}
%\begin{split} 
	\hat{p}_{k+1}^{n+2}
	&= \Tmat\, \big(\gamma\, \hat{p}_k^{n+2} + (1 - \gamma)\, p_{k-1}^{n+2} \big) + r^{n+2} \notag \\
	&= \gamma\, \Tmat\hat{p}_k^{n+2} + (1 - \gamma) \big(\Tmat p_{k-1}^{n+2} + r^{n+2} \big) + \gamma\, r^{n+2} 
	= \Smat\hat{p}_k^{n+2} + \gamma\, r^{n+2}. \label{eq:convergence-splitting:linear2}
%\end{split}
\end{align}
Taking the difference of \eqref{eq:convergence-splitting:linear2} and~\eqref{eq:convergence-splitting:linear1}, we get 
\[
	\hat{p}_{k+1}^{n+2} - p_k^{n+2}
	= \Smat\big( \hat{p}_k^{n+2} - p_{k-1}^{n+2} \big).
\]
A repeated application of this result yields 
\[
	\hat{p}^{n+2}_{k+1} - p_k^{n+2}
	= \Smat\,\big( \hat{p}_k^{n+2} - p_{k-1}^{n+2} \big)
%	= \Smat^2 ( \hat{p}_{k-1}^{n+2} - p_{k-2}^{n+2} )
	= \dots 
	= \Smat^k \big( \hat{p}_{1}^{n+2} - p_{0}^{n+2} \big)
	= \Smat^k \zeta_p^{n+2}, 
\]
which equals the stated relation~\eqref{eq:convergence-splitting:linear}. In particular, we see that the quantity we are interested in can be described in terms of~$\zeta_p^{n+2}$, namely~$p^{n+2} - p_{K-1}^{n+2} = \Smat^{K-1}\zeta_p^{n+2}$. With~\eqref{eq:scheme:init} we further observe that 
\begin{align}
	p^{n+2} - 2p^{n+1} + p^n
	&= p^{n+2} - p^{n+2}_0 \notag \\
	&= p^{n+2} - p_{K-1}^{n+2} + \sum\nolimits_{k=0}^{K-2} \big( p_{k+1}^{n+2} - p_k^{n+2} \big) \notag \\
	&= \Smat^{K-1}\zeta_p^{n+2} + \sum\nolimits_{k=0}^{K-2} \big( \gamma\, \hat{p}_{k+1}^{n+2} + (1 - \gamma)\, p_k^{n+2} - p_k^{n+2} \big) \notag \\
	&= \Smat^{K-1}\zeta_p^{n+2} + \gamma\, \sum\nolimits_{k=0}^{K-2} \big( \hat{p}_{k+1}^{n+2} - p_k^{n+2} \big) \notag \\
	&= \Big(\Smat^{K-1} + \gamma\, \sum\nolimits_{k=0}^{K-2} \Smat^k\Big)\, \zeta_p^{n+2}
	= \tilde \Smat\, \zeta_p^{n+2}. \label{eq:convergence-splitting:zetap1}
\end{align}
To estimate the propagation of the sequence $\zeta_p^{n+2}$, we need to study the relationship between $\zeta_p^{n+2}$ and previous $\zeta_p^\ell$ for $\ell \le n+1$. For this, we first apply the second equation of~\eqref{eq:scheme:hat}. For $n+2$ and $k = 0$ it reads 
\begin{subequations}
\begin{equation} \label{eq:convergence-splitting:weighted:1}
	\Dmat(3\hat{u}_1^{n+2} - 4u^{n+1} + u^n) + \Cmat\,(3\hat{p}_1^{n+2} - 4p^{n+1} + p^n) + 2\tau\Bmat \hat{p}_1^{n+2} 
	= 2\tau g^{n+2}.
\end{equation}
We further need to consider the second equation of~\eqref{eq:scheme:hat} for the previous time step~$n+1$ and both, $k = 0$ and $k = K-1$. This yields 
\begin{align}
	\Dmat(3\hat{u}_1^{n+1} - 4u^n + u^{n-1}) + \Cmat\,(3\hat{p}_1^{n+1} - 4p^n + p^{n-1}) + 2\tau\Bmat \hat{p}_1^{n+1} 
	&= 2\tau g^{n+1}, \label{eq:convergence-splitting:weighted:2} \\
	\Dmat(3u^{n+1} - 4u^n + u^{n-1}) + \Cmat\,(3p^{n+1} - 4p^n + p^{n-1}) + 2\tau\Bmat p^{n+1} 
	&= 2\tau g^{n+1}. \label{eq:convergence-splitting:weighted:3}
\end{align}
\end{subequations}
Taking the weighted difference~$\eqref{eq:convergence-splitting:weighted:1} -  \frac13\cdot \eqref{eq:convergence-splitting:weighted:2} - \frac23\cdot \eqref{eq:convergence-splitting:weighted:3}$, we obtain the relation 
\begin{align} \label{eq:convergence-splitting:system1:a}
%	\Dmat(3\zeta_u^{n+2} &- \zeta_u^{n+1}) + \Cmat\,(3\zeta_p^{n+2} - \zeta_p^{n+1}) + \tfrac23\tau\Bmat (3\zeta_p^{n+2} - \zeta_p^{n+1}) \notag \\
	\Dmat(3\zeta_u^{n+2} &- \zeta_u^{n+1}) + \Ctau(3\zeta_p^{n+2} - \zeta_p^{n+1}) \notag \\
	&\qquad\quad=2\tau (g^{n+2} - g^{n+1}) - \tfrac23\tau\Bmat\big(4(p^{n+1} - p^n) -  (p^n - p^{n-1}) \big).
\end{align}
for $n \ge 1$. Since $p^{-1}$ is not defined, we need to consider the case $n=0$ separately. With~\eqref{eq:scheme:hat} we get 
\begin{align*}
%	3 \Dmat\zeta_u^2 + 3 \Cmat\zeta_p^2 + 2\tau\Bmat \zeta_p^2
	3 \Dmat\zeta_u^2 + 3 \Ctau\zeta_p^2 
	&= 3 \Dmat (\hat{u}^{2}_1 - 2 u^{1} + u^0) + 3 \Cmat_\tau (\hat{p}^{2}_1 - 2 p^{1} + p^0) \\
	% &= 2\tau g^{2} + \Dmat(4 u^{1} - u^0) + \Cmat\,(4 p^{1} - p^0)
	% + 3 \Dmat (- 2 u^{1} + u^0) + 3 \Cmat_\tau (- 2 p^{1} + p^0) \\
	&= 2\tau g^{2} - 2 \Dmat( u^{1} - u^0) -2 \Cmat\,( p^{1} - p^0)
	- 2 \tau \Bmat (2 p^{1} - p^0)
\end{align*}
as well as, using additionally the consistency conditions in~\eqref{eq:initConsistency}, 
\begin{align*}
	\Amat \zeta_u^2 
	= \Amat \big( \hat{u}^{2}_1 - 2 u^{1} + u^0 \big)
	= f^2 - 2f^1 + f^0.
\end{align*}
Inserting this in the previous equation, we obtain once again with~\eqref{eq:initConsistency}, 
\begin{align*}
%	3 \Cmat\zeta_p^2 + 2\tau\Bmat \zeta_p^2
	3 \Ctau\zeta_p^2 
	&= 2\tau g^{2} - 2\tau\Bmat(p^1 - p^0)\\
	&\qquad- 2 \Dmat\Amat^{-1}(\tfrac32 f^2 - 2 f^1 + \tfrac12 f^0) - 2\, (\Dmat\Amat^{-1}\Dmat + \Cmat)\,( p^{1} - p^0)
	- 2 \tau \Bmat p^{1}. \notag 
\end{align*}
Now, observe that $\tfrac32 f^2 - 2 f^1 + \tfrac12 f^0 = \tau \dot f(2\tau) + \calO(\tau^3)$, where the $\calO$ notion means that the difference of the two sides can be written with a vector, whose norm is bounded by a constant times $\tau^2$. In this particular estimate, the involved constant depends on the third derivative of~$f$. From assumption~\eqref{eq:initStability} we get $p^1 - p^0 = p(\tau) - p(0) + \calO(\tau^2) = \tau\dot p(\tau) + \calO(\tau^2)$. Together with the semi-discrete equation~\eqref{eq:semiDiscretePoro} at time $t=\tau$, this finally yields  
\begin{align}
%	3 \Cmat\zeta_p^2 + 2\tau\Bmat \zeta_p^2
	3 \Ctau\zeta_p^2 
	&= 2\tau g^{2} - 2\tau\Bmat(p^1 - p^0)  \notag \\
	&\qquad\qquad- 2\tau\, \big( \Dmat\Amat^{-1}\dot f(\tau) + (\Dmat\Amat^{-1}\Dmat^T + \Cmat)\dot p(\tau) + \Bmat p(\tau) \big) + \calO(\tau^2) \notag \\
	&= 2\tau\, (g^2 - g^1) - 2\tau\Bmat(p^1 - p^0) + \calO(\tau^2). \label{eq:convergence-splitting:zetap_init}
\end{align}
Going back to~\eqref{eq:convergence-splitting:system1:a}, we are now left with writing $3\zeta_u^{n+2} - \zeta_u^{n+1}$ in terms of~$\zeta_p$. For this, we similarly consider the first equation of~\eqref{eq:scheme:hat} for different~$n$, namely for $n+2$ with $k=0$ and for $n$, $n+1$ with $k=K-1$. In combination with~\eqref{eq:convergence-splitting:linear}, this leads to 
\begin{equation} \label{eq:convergence-splitting:system1:b}
	\Amat\zeta_u^{n+2} - \Dmat^T \Smat^{K-1} (2\zeta_p^{n+1} - \zeta_p^n) = \f^{n+2} - 2\f^{n+1} + \f^n
\end{equation}
for $n\ge2$. For the previous iterates, one can check that 
\[
	\Amat \zeta_u^2
%	= \Amat(\hat u^2_1 - 2 u^1 + u^0) 
	= \f^2 - 2\f^1 + f^0, \qquad
	\Amat \zeta_u^3 - \Dmat^T \Smat^{K-1} (2 \zeta_p^2) 
%	= \Amat(\hat u^3_1 - 2 u^2 + u^1) - \Dmat^T(2 p^2 - 2 p^2_{K-1}) 
	= \f^3 - 2\f^2 + f^1.
\]
Hence, setting $\zeta_p^0 = \zeta_p^1 = 0$, equation~\eqref{eq:convergence-splitting:system1:b} holds for all $n \ge 0$. 
Within the right-hand sides of \eqref{eq:convergence-splitting:system1:a} and \eqref{eq:convergence-splitting:system1:b}, we now use Taylor approximations. This leads to 
\[
	\f^{n+2} - 2\f^{n+1} + \f^n 
	= \calO(\tau^2), %\tau^2\ddot{\f}(\xi_f^{n+2}), 
	\qquad
	g^{n+2} - g^{n+1} 
	= \calO(\tau), %\tau\dot{g}(\xi_g^{n+2})
\]
where the hidden constants depend on the second derivative of $f$ and the first derivative of $g$, respectively. This turns~\eqref{eq:convergence-splitting:system1:b} into 
\[ 
	\zeta_u^{n+2} = \Amat^{-1}\Dmat^T\Smat^{K-1}\big(2\zeta_p^{n+1} - \zeta_p^n\big) + \calO(\tau^2) % \tau^2\Amat^{-1}\ddot{f}(\xi_f^{n+2}).
\]
Inserting this into equation~\eqref{eq:convergence-splitting:system1:a} yields, for $n\ge1$, 
\begin{align*} 
	\zeta_p^{n+2} 
	%&= \Tmat\Smat^{K-1}(2\zeta_p^{n+1} - \zeta_p^n)
	%+ \tfrac13\Ctauinv(\Dmat\zeta_u^{n+1} + \Ctau\zeta_p^{n+1}) \\
	%&\qquad- \tfrac29\tau\, \Ctauinv \Bmat \big[ 4 (p^{n+1} - p^n) - (p^n - p^{n-1}) \big]
	%- \tau^2\, \Ctauinv \big[ \Dmat\Amat^{-1} \ddot{f}(\xi_f^{n+2}) - \tfrac23\dot{g}(\xi_g^{n+2}) \big] \\
	%
%	&= \Tmat\Smat^{K-1}(2\zeta_p^{n+1} - \zeta_p^n)
%	-\tfrac13 \Tmat\Smat^{K-1}(2 \zeta_p^n - \zeta_p^{n-1})
%	+ \tfrac13\tau^2\Ctauinv\Dmat\Amat^{-1}\ddot{f}(\xi_f^{n+1})
%	+ \tfrac13\zeta_p^{n+1} \\
%	&\qquad- \tfrac29\tau\, \Ctauinv \Bmat \big[ 4(p^{n+1} - p^n) - (p^n - p^{n-1}) \big]
%	- \tau^2\, \Ctauinv \big[ \Dmat\Amat^{-1} \ddot{f}(\xi_f^{n+2}) - \tfrac23\dot{g}(\xi_g^{n+2}) \big] \\
	%
%	&= \Tmat\Smat^{K-1}(2\zeta_p^{n+1} - \tfrac53\zeta_p^n + \tfrac13\zeta_p^{n-1})
%	+ \tfrac13\zeta_p^{n+1} 
%	+ \tfrac23 \tau^2\, \Ctauinv \dot{g}(\xi_g^{n+2})  \\
%	&\quad- \tfrac29\tau\, \Ctauinv \Bmat \big[ 4 (p^{n+1} - p^n) - (p^n - p^{n-1}) \big] 
%	- \tau^2\, \Ctauinv \Dmat\Amat^{-1} \big[ \ddot{f}(\xi_f^{n+2}) - \tfrac13 \ddot{f}(\xi_f^{n+1}) \big]
	%
	&= \Tmat\Smat^{K-1}(2\zeta_p^{n+1} - \tfrac53\zeta_p^n + \tfrac13\zeta_p^{n-1})
	+ \tfrac13\zeta_p^{n+1}  \\
	&\qquad\qquad- \tfrac29\tau\, \Ctauinv \Bmat \big[ 4 (p^{n+1} - p^n) - (p^n - p^{n-1}) \big] + \calO(\tau^2).
\end{align*}
For $n=0$, we have already computed that 
\[
	\zeta_p^{2} 
%	= \tfrac23\, \tau\, \Ctauinv \big( \tau\, \dot{g}(\xi_g^{2}) - \Bmat (p^1 - p^0) \big) + \calO(\tau^2)
	= - \tfrac23 \tau\, \Ctauinv \Bmat (p^1 - p^0) + \calO(\tau^2).
\]
Either way, the terms including the right-hand sides (and the initial error coming from~\eqref{eq:initStability} for $n=0$) are uniformly bounded in the $\Ctau$-norm in terms of~$\Cfg \tau^2$ with 
\begin{align*}
%	&\big\| \Ctauinv \big[ \Dmat\Amat^{-1} (\ddot{f}(\xi_f^{n+2}) - \tfrac13 \ddot{f}(\xi_f^{n+1}) ) - \tfrac23\dot{g}(\xi_g^{n+2}) \big] \big\|_{\Ctau} + C_\text{init} \\
%	&\hspace{2.3cm}\le 
	\Cfg = \Cfg\big( \omega, \|f\|_{C^2([0,T];\R^{n_u})}, \|g\|_{C^1([0,T];\R^{n_p})}, C_\text{init} \big).
\end{align*}
For the terms preceded by $\Ctauinv\Bmat$, a uniform bound is not so obvious. Instead, we show the boundedness of these terms and~$\zeta_p^{n+2}$ simultaneously. For this, we introduce the block matrix
\[
	\calS 
	\coloneqq \begin{bmatrix}
	2\Tmat\Smat^{K-1} + \tfrac13 \mathbf{I} & -\tfrac53\Tmat\Smat^{K-1} & \tfrac13\Tmat\Smat^{K-1} \\
	\mathbf{I} & 0 & 0 \\
	0 & \mathbf{I} & 0
	\end{bmatrix}
	\in \R^{3n_p, 3n_p}.
\]
With $\tilde \zeta_p^{n+2} \coloneqq \big[(\zeta_p^{n+2})^T, (\zeta_p^{n+1})^T, (\zeta_p^n)^T \big]^T$, the above recursion formula (for $n\ge1$) reads 
\begin{align*}
	\tilde \zeta_p^{n+2}
%	= \calS \tilde \zeta_p^{n+1}
%	- \begin{bmatrix} \Ctauinv \\ 0 \\ 0 \end{bmatrix} \Bigg(\begin{array}{l}
%		\tfrac29\tau\, \Bmat \big[ 4 (p^{n+1} - p^n) - (p^n - p^{n-1}) \big] - \tfrac23\tau^2\, \dot{g}(\xi_g^{n+2}) \\
%		\qquad\quad+ \tau^2\, \Dmat\Amat^{-1} \big(\ddot{f}(\xi_f^{n+2}) - \tfrac13 \ddot{f}(\xi_f^{n+1})\big)  
%	\end{array}\Bigg)
	= \calS \tilde \zeta_p^{n+1}
	- \begin{bmatrix} \Ctauinv \\ 0 \\ 0 \end{bmatrix} \Big( \tfrac29\tau\, \Bmat \big[ 4 (p^{n+1} - p^n) - (p^n - p^{n-1}) \big] + \calO(\tau^2) \Big).
\end{align*}
In the following, we use the notion~$\|\bullet\|_{\Ctau}$ also for vectors in $\R^{3n_p}$, meaning the norm weighted by the block diagonal matrix~$\Diag(\Ctau,\Ctau,\Ctau)$. Together with the norm estimates from Lemma~\ref{lem:matrices}, we obtain 
\begin{align*}
	\|\zeta_p^{n+2}\|_\Ctau
	&\le \|\tilde \zeta_p^{n+2}\|_\Ctau \notag \\
	&\le \|\calS\|_\Ctau\, \|\tilde \zeta_p^{n+1}\|_\Ctau
	+ \tau\, C_p\, \big(4\, \|p^{n+1} - p^n\|_\Ctau + \|p^n - p^{n-1}\|_\Ctau \big) + \Cfg\tau^2 \\
	&\le \|\calS\|_\Ctau\, \|\tilde \zeta_p^{n+1}\|_\Ctau
	+ 5\tau\, C_p\, \max_{\ell\le n} \|p^{\ell+1} - p^\ell\|_\Ctau + \Cfg\tau^2
\end{align*}
for $n\ge1$ with $C_p \coloneqq 2C_\Bmat/(9c_\Cmat)$ as well as 
\begin{equation} \label{eq:convergence-splitting:zeta_initial}
	\|\zeta_p^2\|_\Ctau 
	= \|\tilde \zeta_p^2\|_\Ctau \le 3\tau\, C_p\, \|p^1 - p^0\|_\Ctau + \Cfg \tau^2.
\end{equation}
Applying these estimates recursively, we conclude that 
\begin{align}
	\|\zeta_p^{n+2}\|_\Ctau
	&\le \sum_{k=0}^{n} \|\calS\|_\Ctau^k \Big( 5\tau\, C_p\, \max_{\ell\le n} \|p^{\ell+1} - p^\ell\|_\Ctau + \Cfg \tau^2 \Big) \notag\\
	&\le \frac{1}{1 - \|\calS\|_\Ctau}\, \Big(5\tau\, C_p\, \max_{\ell\le n} \|p^{\ell+1} - p^\ell\|_\Ctau + \Cfg \tau^2 \Big), \label{eq:convergence-splitting:zetap2}
\end{align}
where we bound the sum by the geometric series and apply~$\|\calS\|_\Ctau < 1$, which is shown in Lemma~\ref{lem:recursion-matrix} under the assumption~\eqref{eq:iterationBound}. As such, the $\zeta_p$ quantities are bounded in terms of the differences of previous time steps. Together with~\eqref{eq:convergence-splitting:zetap1}, this leads to
\[
	\|p^{n+2} - p^{n+1}\|_\Ctau = \|\tilde \Smat \zeta_p^{n+2} + p^{n+1} - p^n\|_\Ctau
	\le \big(1 + 5\tau\, \theta\, C_p\big) \max_{\ell\le n} \|p^{\ell+1} - p^\ell\|_\Ctau + \theta\Cfg \tau^2
\]
with~$\theta \coloneqq 1 / (1 - \|\calS\|_\Ctau)$. Using the discrete 
Gr\"onwall lemma, we finally get 
\[
	\max_{\ell \le n} \big\| p^{\ell+1} - p^\ell \big\|_\Ctau 
	\le e^{5 t^n \theta C_p} \big(\|p^1 - p^0\|_\Ctau + \tau\, t^n\, \theta \Cfg \big).
\]
Due to $p^0=p(0)$ and assumption~\eqref{eq:initStability}, the expression on the right is of order~$\tau$. Inserting this estimate into~\eqref{eq:convergence-splitting:zetap2}, we get a constant~$C_1$ such that  
\begin{align}
	\label{eq:finalEstimateZeta}
	\|\zeta_p^{n+2}\|_\Ctau
	= C_1\, \tau^2. 
\end{align}
In combination with~\eqref{eq:convergence-splitting:linear} for $k=K-1$, this shows the first claim \eqref{eq:convergence-splitting:stability1}.

% claim 2
For the second claim, we define the differences 
\[
	\phi_u^{n+2} \coloneqq \zeta_u^{n+2} - \zeta_u^{n+1},
	\qquad
	\phi_p^{n+2} \coloneqq \zeta_p^{n+2} - \zeta_p^{n+1}
\]
for $n \ge 1$. Due to $\zeta_p^0 = \zeta_p^1 = 0$, we further set $\phi_p^2 \coloneqq \zeta_p^2$ and $\phi_p^1 \coloneqq 0$. By the formula derived for $\zeta_u^3$ and~\eqref{eq:convergence-splitting:zeta_initial} we get 
\[
	\zeta_u^3 
	= \Amat^{-1}(f^3 - 2 f^2 + f^1 + 2\Dmat^T \Smat^{K-1}\zeta_p^2) 
	= \calO(\tau^2). 
\]
Subtracting \eqref{eq:convergence-splitting:zetap_init} from \eqref{eq:convergence-splitting:system1:a} for $n = 1$ gives, together with~$\zeta_u^2, \zeta_u^3 = \calO(\tau^2)$,  
\begin{align*}
	\Ctau(3\phi_p^3 - \phi_p^2)   
	&= 2\tau (\g^3 - 2\g^2 + \g^1) - \tfrac83\tau \Bmat \tilde \Smat \zeta_p^2 - \Dmat(3\zeta_u^3 - \zeta_u^2) + \calO(\tau^2) \\
	&= -(\tfrac83\tau \Bmat \tilde \Smat + 3\Dmat\Amat^{-1}\Dmat^T) \zeta_p^2 + \calO(\tau^2). %\notag
\end{align*}
Finally, $\phi_p^2 = \zeta_p^2 = \calO(\tau^2)$ shows that also~$\phi_p^3$ is of order $2$. To bound the remaining~$\phi_p^n$, we consider the difference of equations~\eqref{eq:convergence-splitting:system1:a} and~\eqref{eq:convergence-splitting:system1:b} for consecutive~$n$. This yields 
\begin{align*}
	\Amat\phi_u^{n+2} - \Dmat^T \Smat^{K-1} (2\phi_p^{n+1} - \phi_p^n) 
	&= \f^{n+2} - 3\f^{n+1} + 3\f^n - \f^{n-1}, \\ %\label{eq:convergence-splitting:system2} \\
	\Dmat(3\phi_u^{n+2} - \phi_u^{n+1}) + \Ctau(3\phi_p^{n+2} - \phi_p^{n+1})   
	&= 2\tau (\g^{n+2} - 2\g^{n+1} + \g^n)  -\tfrac23\tau \Bmat \tilde \Smat (4\zeta_p^{n+1} - \zeta_p^n)
\end{align*}
for $n\ge2$. Note that we have used~\eqref{eq:convergence-splitting:zetap1} for the last term. The terms on the right-hand side of the second equation involving $\zeta_p^n$ are bounded due to~\eqref{eq:finalEstimateZeta}.
To obtain~\eqref{eq:convergence-splitting:stability2}, we proceed similarly as for the derivation of~\eqref{eq:convergence-splitting:zetap2} and observe that  
\begin{align}
	\|\phi_p^{n+2}\|_\Ctau
	\le \|\calS\|_\Ctau^n \|\phi_p^2\|_\Ctau + \tau^3 (\Cfg + 5C_1C_p) \sum_{\ell=0}^{n} \|\calS\|_\Ctau^\ell  
	\le C_1\, \tau^2\|\calS\|_\Ctau^n + \calO(\tau^3). \label{eq:convergence-splitting:phip}
\end{align}
%
%For the squared norm of $\phi_p^{n+2}$ we obtain
%\[
%	\|\phi_p^{n+2}\|_\Ctau^2 \le 2\tau^4\|\calS\|_\Ctau^{2n}C_1^2 + 2\tau^6\bigg(\frac{5\, C_1C_p}{1 - \|\calS\|_\Ctau}\bigg)^2. 
%\]
%
This directly leads to 
\begin{align*}
	\sum_{k = 0}^n \|\phi_p^{k+2}\|_\Ctau^2
	&\le \frac{2\, C_1^2}{1 - \|\calS\|_\Ctau^2} \tau^4 + t^n\calO(\tau^5).
\end{align*}
Together with~\eqref{eq:convergence-splitting:linear}, the second claim~\eqref{eq:convergence-splitting:stability2} follows.
\end{proof}
%

%\MD{ 
%%%  Motivation der neuen Annahme %%%
%	\begin{assumption}  \label{ass:initialStaticPressure}
%		We can reformulat the problem \eqref{eq:semiDiscretePoro}, by solving the first equation for $u$ and differentiating it:
%		%
%		\[ \Dmat\Amat^{-1} \dot f + (\Dmat\Amat^{-1}\Dmat^T + C)\dot p + \Bmat p = g \]
%		%
%		Considering this equation at time $t^2$ and using Taylor approximations obtain
%		%
%		\[ \Dmat\Amat^{-1} (3 f^2 - 4 f^1 + f^0) + 2 \tau (\Dmat\Amat^{-1}\Dmat^T + \Cmat)(\dot p(1) + \tau \ddot p(1)) + 2\tau \Bmat(p(1) + \tau \dot p(1)) = 2\tau g^2 + \calO(\tau^3) \]
%		%
%		We assume that $p^1$ and $p^0$ approximate $p$ in this formula sufficiently well, specifically
%		%
%		\[ \Dmat\Amat^{-1} (3 f^2 - 4 f^1 + f^0) + 2 (\Dmat\Amat^{-1}\Dmat^T + \Cmat)(p^1 - p^0) + 2\tau \Bmat(2p^1 - p^0) = 2\tau g^2 + \calO(\tau^3) \]
%		%
%		Which follows for example if $p^1 - p^0 = \tau \dot p(1) + \tau^2 \ddot p(1) + \calO(\tau^3)$.
%	\end{assumption}
%}

\begin{remark} \label{rem:additional-assumption}
Estimate~\eqref{eq:convergence-splitting:stability2} can be further improved under additional assumptions on the initial data. More precisely, we need to assume that the stability assumption~\eqref{eq:initStability} holds with one additional order and that the second derivative of the solution as well as of~$f$ are of order~$\tau$ at time $t = 0$, i.e., 
\[
	\big\| p^1 - p(\tau) \big\|_\Ctau
	\le C_\text{init} \tau^3 
	,\qquad
	\|\ddot p(0)\|_\Ctau \lesssim \tau
	\qquad \text{and} \qquad
	\|\ddot f(0)\|_\Ctau \lesssim \tau.
\]
With this, estimate~\eqref{eq:convergence-splitting:stability2} holds with $C_2 \tau^5$ on the right-hand side. 
%\[
%	\sum_{k=0}^n \big\| (p^{k+2} - p_{K-1}^{k+2}) - (p^{k+1} - p_{K-1}^{k+1}) \big\|_\Ctau^2 
%	\le C_2\, \tau^5, %\label{eq:convergence-splitting:stability2}
%\]
Note, however, that the two last conditions are not natural in the sense that it cannot be guaranteed by the choice of the initial data.  
\end{remark}
For the upcoming convergence analysis, we introduce an abbreviation of the BDF-$2$ scheme along with a useful identity. 
\begin{lemma} \label{lem:polarization}
With the notation	
\[
	\BDF_2\, \bullet^{n+2}
	\coloneqq \tfrac32 \bullet^{n+2} - 2 \bullet^{n+1} + \tfrac12\, \bullet^n
	= \tfrac32\, (\bullet^{n+2} - \bullet^{n+1}) - \tfrac12\,(\bullet^{n+1} - \bullet^n)
\]
we have for a symmetric and positive definite matrix $\Bmat$, 
\begin{align*}
	2\, (\BDF_2&\eta_p^{n+2})^T \Bmat\eta_p^{n+2}\\
	&= \BDF_2 \|\eta_p^{n+2}\|_\Bmat^2 + \|\eta_p^{n+2} - \eta_p^{n+1}\|_\Bmat^2 - \|\eta_p^{n+1} - \eta_p^n\|_\Bmat^2 + \tfrac12\|\eta_p^{n+2} - 2\eta_p^{n+1} + \eta_p^n\|.
\end{align*}
\end{lemma}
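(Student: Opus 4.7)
The plan is a direct algebraic verification that reduces to a routine expansion in the $\Bmat$-inner product. Since $\Bmat$ is symmetric and positive definite, the form $\langle x,y\rangle_\Bmat \coloneqq x^T\Bmat y$ is a genuine inner product and $\|x\|_\Bmat^2 = \langle x,x\rangle_\Bmat$; all standard polarization identities therefore apply verbatim. (I also presume the missing subscript and square on the last summand of the RHS, so that it reads $\tfrac12\|\eta_p^{n+2} - 2\eta_p^{n+1} + \eta_p^n\|_\Bmat^2$; otherwise the identity cannot be homogeneous.)

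The reparametrization I would use is in terms of the increments
\[
  u \coloneqq \eta_p^{n+2} - \eta_p^{n+1}, \qquad v \coloneqq \eta_p^{n+1} - \eta_p^n, \qquad b \coloneqq \eta_p^{n+1}.
\]
This is convenient because the statement itself already records $\BDF_2\eta_p^{n+2} = \tfrac32 u - \tfrac12 v$, and moreover $\eta_p^{n+2} - 2\eta_p^{n+1} + \eta_p^n = u - v$, $\eta_p^{n+2} = b + u$, and $\eta_p^n = b - v$. Both sides of the claimed identity will now be polynomials in $b,u,v$ with respect to $\langle\cdot,\cdot\rangle_\Bmat$.

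For the LHS I would expand
\[
  2\,\langle \tfrac32 u - \tfrac12 v,\; b + u\rangle_\Bmat
  = 3\langle b,u\rangle_\Bmat + 3\|u\|_\Bmat^2 - \langle b,v\rangle_\Bmat - \langle u,v\rangle_\Bmat.
\]
For the RHS, I would first expand $\BDF_2\|\eta_p^{n+2}\|_\Bmat^2$ using $\eta_p^{n+2} = b+u$ and $\eta_p^n = b-v$, giving
\[
  \BDF_2\|\eta_p^{n+2}\|_\Bmat^2
  = 3\langle b,u\rangle_\Bmat + \tfrac32\|u\|_\Bmat^2 - \langle b,v\rangle_\Bmat + \tfrac12\|v\|_\Bmat^2,
\]
then add $\|u\|_\Bmat^2 - \|v\|_\Bmat^2 + \tfrac12\|u-v\|_\Bmat^2 = \tfrac32\|u\|_\Bmat^2 - \langle u,v\rangle_\Bmat - \tfrac12\|v\|_\Bmat^2$. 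The $\|v\|_\Bmat^2$ contributions cancel and the remainder matches the LHS term by term.

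There is no conceptual obstacle; the only thing that needs care is bookkeeping of the six quadratic monomials $\|b\|_\Bmat^2, \|u\|_\Bmat^2, \|v\|_\Bmat^2, \langle b,u\rangle_\Bmat, \langle b,v\rangle_\Bmat, \langle u,v\rangle_\Bmat$ — in particular verifying that $\|b\|_\Bmat^2$ disappears on the RHS (since it is absent on the LHS), which it does precisely because of the coefficients $(\tfrac32,-2,\tfrac12)$ of the BDF-$2$ operator summing to zero.
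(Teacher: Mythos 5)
Your verification is correct, and the paper itself offers no proof of this lemma --- it is stated as a standalone identity and used directly in the proof of Theorem~\ref{thm:general-convergence}, so there is nothing to diverge from. Your substitution $u = \eta_p^{n+2}-\eta_p^{n+1}$, $v=\eta_p^{n+1}-\eta_p^n$, $b=\eta_p^{n+1}$ is a clean way to organize the expansion, and you correctly observe both that the $\|b\|_\Bmat^2$ terms vanish because the BDF-$2$ coefficients sum to zero and that the last summand in the statement should read $\tfrac12\|\eta_p^{n+2}-2\eta_p^{n+1}+\eta_p^n\|_\Bmat^2$ (the printed version is a typo; the identity as written in the paper is not even dimensionally consistent, and the squared, $\Bmat$-weighted form is what is actually used in \eqref{eq:general-convergence:identity}).
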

We are now in the position to prove second-order convergence of the newly introduced iteration scheme~\eqref{eq:scheme}. 
\begin{theorem}[Second-order convergence]
\label{thm:general-convergence} 
Consider the assumptions of Proposition~\ref{prop:convergence-splitting}, including $3\omega^K < (2 + \omega)^{K-1}$. Further, let the solution $(u, p)$ of~\eqref{eq:semiDiscretePoro} satisfy the smoothness conditions~$u \in C^3([0,T], \R^{n_u})$ and~$p \in C^3([0,T], \R^{n_p})$. Then scheme~\eqref{eq:scheme} converges with order $1.75$. More precisely, we have 
\begin{align*}
	\|u^n - u(t^n)\|_\Amat^2 + \|p^n - p(t^n)\|_\Cmat^2 
	\le C\, \tau^{7/2}
\end{align*}
with a positive constant~$C$ only depending on the solution, the right-hand sides, the time horizon, and the material parameters. 
Under the additional assumptions stated in Remark~\ref{rem:additional-assumption} the scheme converges with full order $2$.
\end{theorem}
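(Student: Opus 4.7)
The plan is to view the final iterates $(u^{n+2}, p^{n+2}) = (\hat u^{n+2}_K, \hat p^{n+2}_K)$ as the output of a perturbed fully implicit BDF-$2$ scheme. Comparing~\eqref{eq:scheme:hat} for $k=K-1$ with the fully implicit discretization~\eqref{eq:BDFimplicit}, the flow equation coincides exactly with BDF-$2$, while the elastic equation carries the additional right-hand side term $-\Dmat^T\psi^{n+2}$, where $\psi^{n+2} \coloneqq p^{n+2} - p^{n+2}_{K-1}$. Proposition~\ref{prop:convergence-splitting} provides the two key bounds on this perturbation: the pointwise estimate $\|\psi^{n+2}\|_\Ctau \le C_1\tau^2$ and the sum-squared estimate $\sum_n \|\psi^{n+2}-\psi^{n+1}\|_\Ctau^2 \le C_2\tau^4$.

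With the BDF-$2$ truncation error $\eta^{n+2} = \calO(\tau^2)$ for $u,p \in C^3$, the errors $e_u^n \coloneqq u^n - u(t^n)$ and $e_p^n \coloneqq p^n - p(t^n)$ satisfy
\begin{align*}
    \Amat e_u^{n+2} - \Dmat^T e_p^{n+2} &= -\Dmat^T\psi^{n+2}, \\
    \Dmat\,\BDF_2 e_u^{n+2} + \Cmat\,\BDF_2 e_p^{n+2} + \tau\Bmat e_p^{n+2} &= -\tau\eta^{n+2}.
\end{align*}
The standard BDF-$2$ energy procedure is to test the first equation with $2\tau\,\BDF_2 e_u^{n+2}$ and the second with $2\tau\, e_p^{n+2}$, then add so that the cross-coupling terms $(\BDF_2 e_u)^T \Dmat^T e_p$ cancel. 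Lemma~\ref{lem:polarization} converts $2(\BDF_2 e)^T M e$ into $\BDF_2 \|e\|_M^2$ plus a telescoping first-difference and a non-negative second-difference term. Summing over $n=0,\ldots,N-2$, telescoping, and invoking the G-stability of BDF-$2$ together with the consistency of the initial data yields an inequality of the form
\[
    \|e_u^N\|_\Amat^2 + \|e_p^N\|_\Cmat^2 + \tau \sum_n \|e_p^n\|_\Bmat^2 \;\lesssim\; (\text{truncation}) + (\text{perturbation}).
\]

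The truncation contribution is standard and of order $\tau^4$. The crux of the proof is the perturbation term $\mathcal{P} \coloneqq 2\tau \sum_n (\BDF_2 e_u^{n+2})^T \Dmat^T \psi^{n+2}$, which cannot be treated with the pointwise bound alone without losing too much through summation. I would therefore apply Abel summation to shift the $\BDF_2$ operator from $e_u$ onto $\psi$. The resulting boundary contributions are controlled by $\sqrt\omega\,\|e_u^N\|_\Amat\|\psi^N\|_\Ctau$ and absorbed via Young's inequality, producing an $\calO(\tau^4)$ remainder. The bulk term reduces to $\sum_m (e_u^m)^T \Dmat^T (\psi^{m+1}-\psi^m)$; one Cauchy--Schwarz combining the sum-squared bound $\sum \|\psi^{m+1}-\psi^m\|_\Ctau^2 \le C_2\tau^4$ with a discrete $\ell^2$-norm of $e_u$, balanced against the $\tau$-weighted energy on the left-hand side via a Young parameter of size $\epsilon \sim \tau$, produces a contribution of order $\tau^{7/2}$ to $\|e^N\|^2$.

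A discrete Gr\"onwall argument closes the estimate and gives $\|e_u^N\|_\Amat^2 + \|e_p^N\|_\Cmat^2 \le C\tau^{7/2}$, which is the claimed order $7/4$ in the error norm. The main obstacle is precisely this last perturbation step: the interplay of the two bounds on $\psi$ from Proposition~\ref{prop:convergence-splitting} is delicate, and the fractional loss from $2$ to $7/4$ comes from having to trade one factor of $\sqrt\tau$ when absorbing back into the $\tau$-weighted left-hand side. Under the stronger assumptions of Remark~\ref{rem:additional-assumption} the sum-squared bound improves by one order to $\calO(\tau^5)$; repeating the same perturbation analysis with this sharper input upgrades the bound to $\|e^N\|^2 \le C\tau^4$, recovering the optimal order $2$ with no structural change to the argument.
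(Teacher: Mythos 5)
Your overall framework is the same as the paper's: interpret the final iterate as a perturbed implicit BDF-$2$ scheme with defect $\Dmat^T\psi^{n+2}$ in the elastic equation, run the G-stable energy argument via Lemma~\ref{lem:polarization} with test functions $\BDF_2\eta_u^{n+2}$ and $\eta_p^{n+2}$, and feed in the two bounds \eqref{eq:convergence-splitting:stability1}--\eqref{eq:convergence-splitting:stability2} on $\psi$. The gap is in the one step you yourself flag as the crux. After Abel summation the bulk term pairs the \emph{undifferenced} error $e_u^m$ with $\psi^{m+1}-\psi^m$, and $e_u^m$ is only controlled in $\ell^\infty$ by the very quantity you are estimating; there is no $\tau$-weighted $\ell^2$ control of $\|e_u\|_\Amat$ on the left-hand side (the energy identity only produces $\tau\sum_n\|\eta_p^n\|_\Bmat^2$). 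Cauchy--Schwarz then gives
$\sum_m (e_u^m)^T\Dmat^T(\psi^{m+1}-\psi^m)\le \sqrt{\omega}\,\bigl(\sum_m\|e_u^m\|_\Amat^2\bigr)^{1/2}\sqrt{C_2}\,\tau^2\le\sqrt{\omega C_2 T}\,\max_m\|e_u^m\|_\Amat\,\tau^{3/2}$,
and Young (or Young with $\epsilon\sim\tau$ followed by Gr\"onwall) yields a remainder of order $\tau^{3}$, not $\tau^{7/2}$; a weight $\epsilon\sim\sqrt\tau$ would give $\tau^{7/2}$ but destroys the discrete Gr\"onwall constant. So as written your argument proves order $1.5$ in the general case, not $1.75$.

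The paper's decisive extra ingredient, which your sketch is missing, is a \emph{second} energy identity: testing the $\BDF_2$-differenced elastic error equation with $\BDF_2\eta_u^{n+2}$ and the flow error equation with $\BDF_2\eta_p^{n+2}$ gives \eqref{eq:general-convergence:single-step},
$\|\BDF_2\eta_u^{n+2}\|_\Amat^2+\tau\,(\nu_\Bmat^{n+2}-\nu_\Bmat^{n+1})\le\omega\,\|\BDF_2\psi^{n+2}\|_\Ctau^2+\calO(\tau^6)$,
i.e.\ the BDF-$2$ increment of the displacement error is itself pointwise controlled by the BDF-$2$ increment of $\psi$ up to a telescoping correction. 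This licenses a Young inequality with weights $\tau^{-1/2}$ and $\tau^{1/2}$ directly on $(\BDF_2\eta_u^{n+2})^T\Dmat^T\psi^{n+2}$, without ever moving the difference operator off $\eta_u$: the $\tau^{-1/2}\|\BDF_2\eta_u\|_\Amat^2$ part is summable to $\tau^{-1/2}\cdot C_2\tau^4=\calO(\tau^{7/2})$ by \eqref{eq:convergence-splitting:stability2}, and the $\tau^{1/2}\|\psi\|_\Cmat^2$ part sums to $\tau^{-1}\cdot\tau^{1/2}\cdot\tau^4=\calO(\tau^{7/2})$ by \eqref{eq:convergence-splitting:stability1}. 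You should add this auxiliary estimate (or an equivalent mechanism) to close the general case. Note that under the strengthened assumptions of Remark~\ref{rem:additional-assumption} your route does happen to recover order $2$, since $\sum\|\psi^{m+1}-\psi^m\|_\Ctau^2\le C_2\tau^5$ makes the bulk term $\calO(\max\|e_u\|\,\tau^2)$; but the fractional rate $7/4$ in the general case genuinely requires the additional single-step estimate.
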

\begin{proof}
For the difference between the (exact) semi-discrete solution and the outcome of scheme~\eqref{eq:scheme}, we introduce 
\[ 
	\eta_u^{n} \coloneqq u^{n} - u(t^{n}), \qquad
	\eta_p^{n} \coloneqq p^{n} - p(t^{n}).  
\]
The convergence proof is based on the following iterates 
\begin{align*}
	\nu_\Amat^{n+1} 
	&\coloneqq \tfrac32\, \|\eta_u^{n+1}\|_\Amat^2 - \tfrac12\, \|\eta_u^{n}\|_\Amat^2 + \|\eta_u^{n+1} - \eta_u^{n}\|_\Amat^2, \\
	\nu_\Bmat^{n+1} 
	&\coloneqq \tfrac32\, \|\eta_p^{n+1}\|_\Bmat^2 - \tfrac12\, \|\eta_p^{n}\|_\Bmat^2 + \|\eta_p^{n+1} - \eta_p^{n}\|_\Bmat^2, \\
	\nu_\Cmat^{n+1} 
	&\coloneqq \tfrac32\, \|\eta_p^{n+1}\|_\Cmat^2 - \tfrac12\, \|\eta_p^{n}\|_\Cmat^2 + \|\eta_p^{n+1} - \eta_p^{n}\|_\Cmat^2. 
\end{align*}
Which are sufficient for bounding the error, since we have by the triangle inequality
\begin{align}
	\nu_\Cmat^{n+1}
	&= \tfrac32\, \|\eta_p^{n+1}\|_\Cmat^2 - \tfrac12\, \|\eta_p^{n}\|_\Cmat^2 + \|\eta_p^{n+1} - \eta_p^{n}\|_\Cmat^2 \notag \\
	&\ge \tfrac32\, \|\eta_p^{n+1}\|_\Cmat^2 - \tfrac12\, \|\eta_p^{n+1}\|_\Cmat^2 + \tfrac12\|\eta_p^{n+1} - \eta_p^{n}\|_\Cmat^2 \notag \\
	&= \|\eta_p^{n+1}\|_\Cmat^2 + \tfrac12\, \|\eta_p^{n+1} - \eta_p^{n+1}\|_\Cmat^2 
	\ge \|\eta_p^{n+1}\|_\Cmat^2 \label{eq:general-convergence:nu-ineq-p}
\end{align}
and, analogously, $\|\eta_u^{n+1}\|_\Amat^2 \le \nu_\Amat^{n+1}$. 
Now let $n\ge 0$. Using the approximation property of BDF-$2$, namely 
\begin{align*}
	\tau\, \dot{u}(t^{n+2}) 
	&= \tfrac32 u(t^{n+2}) - 2 u(t^{n+1}) + \tfrac12 u(t^n) + \calO(\tau^3), \\
	\tau\, \dot{p}(t^{n+2}) 
	&= \tfrac32 p(t^{n+2}) - 2 p(t^{n+1}) + \tfrac12 p(t^n) + \calO(\tau^3),
\end{align*}
we obtain from equations~\eqref{eq:semiDiscretePoro} and~\eqref{eq:scheme:hat} for $k = K-1$ that 
\begin{subequations} \label{eq:general-convergence:system}
\begin{align}
	\Amat\eta_u^{n+2} - \Dmat^T\eta_p^{n+2} 
	&= \Dmat^T(p_{K-1}^{n+2} - p^{n+2}), \label{eq:general-convergence:system:a}\\
	\Dmat(\BDF_2 \eta_u^{n+2}) + \Cmat\,(\BDF_2 \eta_p^{n+2}) + \tau \Bmat\eta_p^{n+2} 
	&= b^{n+2}_{u,p} % \tfrac{1}{2} \tau^3 \big[\Dmat u^{(3)}(\xi_u^{n+2}) + \Cmat p^{(3)}(\xi_p^{n+2}) \big].
\end{align}
\end{subequations}
with $b^{n+2}_{u,p} = \calO(\tau^3)$. More precisely, the third-order term $b^{n+2}_{u,p}$ depends on the third derivatives of $u$ and $p$. With $\psi^{n+2} \coloneqq p_{K-1}^{n+2} - p^{n+2}$, equation~\eqref{eq:general-convergence:system:a} implies   
\[ 
	\Amat(\BDF_2 \eta_u^{n+2}) - \Dmat^T(\BDF_2 \eta_p^{n+2}) 
	= \Dmat^T (\BDF_2 \psi^{n+2}) 
\]
for $n \ge 2$. By setting $\psi^1 = \psi^0 = 0$, this formula also holds for $n = 0, 1$ due to the consistency assumption~\eqref{eq:initConsistency}. Now, we multiply the latter two equations from the left with $(\BDF_2 \eta_p^{n+2})^T$ and~$(\BDF_2 \eta_u^{n+2})^T$, respectively, and consider their sum. Using the inequality 
\begin{align}
	2\, (\BDF_2&\eta_p^{n+2})^T \Bmat\eta_p^{n+2} \notag \\
	&= \BDF_2 \|\eta_p^{n+2}\|_\Bmat^2 + \|\eta_p^{n+2} - \eta_p^{n+1}\|_\Bmat^2 - \|\eta_p^{n+1} - \eta_p^n\|_\Bmat^2 + \tfrac12\, \|\eta_p^{n+2} - 2\eta_p^{n+1} + \eta_p^n\| \notag \\
	&\ge \BDF_2 \|\eta_p^{n+2}\|_\Bmat^2 + \|\eta_p^{n+2} - \eta_p^{n+1}\|_\Bmat^2 - \|\eta_p^{n+1} - \eta_p^n\|_\Bmat^2 \notag \\
	&= \big(\tfrac32\, \|\eta_p^{n+2}\|_\Bmat^2 - \tfrac12\, \|\eta_p^{n+1}\|_\Bmat^2 + \|\eta_p^{n+2} - \eta_p^{n+1}\|_\Bmat^2\big) \notag \\
	&\qquad\quad- \big(\tfrac32\, \|\eta_p^{n+1}\|_\Bmat^2 - \tfrac12\, \|\eta_p^n\|_\Bmat^2 + \|\eta_p^{n+1} - \eta_p^n\|_\Bmat^2\big)
	= \nu_\Bmat^{n+2} - \nu_\Bmat^{n+1}, \label{eq:general-convergence:identity}
\end{align}
which stems from Lemma \ref{lem:polarization}, this yields 
\begin{align}
	&\|\BDF_2 \eta_u^{n+2}\|_\Amat^2 + \|\BDF_2 \eta_p^{n+2}\|_\Cmat^2 + \tfrac{1}{2} \tau\, \big(\nu_\Bmat^{n+2} - \nu_\Bmat^{n+1}\big) \notag \\
	&\qquad\quad\le \|\BDF_2 \eta_u^{n+2}\|_\Amat^2 + \|\BDF_2 \eta_p^{n+2}\|_\Cmat^2 + \tau\,(\BDF_2 \eta_p^{n+2})^T \Bmat\eta_p^{n+2} \notag \\
	&\qquad\quad= (\BDF_2 \eta_u^{n+2})^T\Dmat^T (\BDF_2 \psi^{n+2}) + (\BDF_2 \eta_p^{n+2})^T b_{u,p}.
	\label{eq:general-convergence:ineq-a}
\end{align}
Using the definition of the coupling strength~$\omega$ and Young's inequality, the first summand of the right-hand side of \eqref{eq:general-convergence:ineq-a} can be bounded by 
\begin{align*}
	(\BDF_2 \eta_u^{n+2})^T\Dmat^T (\BDF_2 \psi^{n+2})
	% \qquad\le \sqrt{\omega}\, \|\BDF_2 \psi^{n+2}\|_\Cmat \, \|\BDF_2 \eta_u^{n+2}\|_\Amat \\ 
	\le \tfrac\omega2\, \|\BDF_2 \psi^{n+2}\|_\Cmat^2 + \tfrac12\, \|\BDF_2 \eta_u^{n+2}\|_\Amat^2 .
\end{align*}
For the second summand of \eqref{eq:general-convergence:ineq-a} we get  
\begin{align*}
%	\tfrac{1}{2} \tau^3 (\BDF_2 \eta_p^{n+2})^T &\big[\Dmat u^{(3)}(\xi_u^{n+2}) + \Cmat p^{(3)}(\xi_p^{n+2}) \big] \\
%	&\quad\le \tfrac{1}{2} \tau^3\, \|\BDF_2 \eta_p^{n+2}\|_\Cmat
%	\big( \sqrt{\omega}\, \|u^{(3)}(\xi_u^{n+2})\|_\Amat + \| p^{(3)}(\xi_p^{n+2})\|_\Cmat  \big)  \\ 
%	&\quad\le \tfrac18 \tau^6 \big( \omega\, \|u^{(3)}\|^2_{L^\infty(\Amat)} + \|p^{(3)}\|_{L^\infty(\Cmat)}^2 \big) + \|\BDF_2 \eta_p^{n+2}\|_\Cmat^2. 
	%
	(\BDF_2 \eta_p^{n+2})^T b_{u,p}
	\le \|\BDF_2 \eta_p^{n+2}\|_\Cmat^2 + \| b_{u,p}\|^2
	= \|\BDF_2 \eta_p^{n+2}\|_\Cmat^2 + \calO(\tau^6).
\end{align*}
Absorbing~$\tfrac12\, \|\BDF_2 \eta_u^{n+2}\|_\Amat^2$ and $\|\BDF_2 \eta_p^{n+2}\|_\Cmat^2$ and multiplying both sides by $2$, we obtain 
\begin{align}
	\|\BDF_2 \eta_u^{n+2}\|_\Amat^2 + \tau\, \big(\nu_\Bmat^{n+2} - \nu_\Bmat^{n+1}\big)
	\le \omega\, \|\BDF_2 \psi^{n+2}\|_\Ctau^2 + \calO(\tau^6).
%	+ \tfrac14\, \tau^6 \big( \omega\, \|u^{(3)}\|^2_{L^\infty(\Amat)} + \|p^{(3)}\|_{L^\infty(\Cmat)}^2 \big). 
	\label{eq:general-convergence:single-step}
\end{align}
%
% \begin{align}
% \nu_\Bmat^{n+2}
% &= \tfrac32\|\eta_p^{n+2}\|_\Bmat^2 - \tfrac12 \|\eta_p^{n+1}\|_\Bmat^2 + \|\eta_p^{n+2} - \eta_p^{n+1}\|_\Bmat^2 \notag \\
% &\ge \tfrac32\|\eta_p^{n+2}\|_\Bmat^2 - \tfrac12 \|\eta_p^{n+2}\|_\Bmat^2 + \tfrac12\|\eta_p^{n+2} - \eta_p^{n+1}\|_\Bmat^2 \notag \\
% &= \|\eta_p^{n+2}\|_\Bmat^2 + \tfrac12\|\eta_p^{n+2} - \eta_p^{n+1}\|_\Bmat^2 \ge \|\eta_p^{n+2}\|_\Bmat^2 \label{eq:general-convergence:nu-ineq}
% \end{align}
%
We now again consider the system~\eqref{eq:general-convergence:system}. This time, however, using~$\BDF_2 \eta_u^{n+2}$ and~$\eta_p^{n+2}$ as test functions, respectively. The sum then reads 
\begin{align*}
	&(\BDF_2 \eta_u^{n+2})^T\Amat\eta_u^{n+2} +
	(\eta_p^{n+2})^T\Cmat\,(\BDF_2 \eta_p^{n+2}) + \tau\, \|\eta_p^{n+2}\|_\Bmat^2 \\
	&\qquad\qquad= (\BDF_2 \eta_u^{n+2})^T\Dmat^T\psi^{n+2} + (\eta_p^{n+2})^Tb^{n+2}_{u,p}.
	%+ \tfrac{1}{2}\, \tau^3 (\eta_p^{n+2})^T\big[\Dmat u^{(3)}(\xi_u^{n+2}) + \Cmat p^{(3)}(\xi_p^{n+2}) \big]. 
\end{align*}
Considering an inequality as~\eqref{eq:general-convergence:identity} but for $\nu_\Amat^{n+2}$ and $\nu_\Cmat^{n+2}$, this can be transformed to  
\begin{align*}
	\nu^{n+2}_\Amat - \nu^{n+1}_\Amat + \nu^{n+2}_\Cmat - \nu^{n+1}_\Cmat + 2\tau\, \|\eta_p^{n+2}\|_\Bmat^2 
	\le 2\, (\BDF_2 \eta_u^{n+2})^T\Dmat^T\psi^{n+2} + 2\, (\eta_p^{n+2})^Tb^{n+2}_{u,p}.
\end{align*}
Now, let $\beta$ be the coefficient appearing in the right-hand side of~\eqref{eq:convergence-splitting:stability2}, i.e., $\beta=4$ in the general case and $\beta=5$ in the special case considered in Remark~\ref{rem:additional-assumption}. Using once more Young's inequality, we bound the right-hand side by 
\begin{align*}
	\tau^{-\frac{\beta-3}{2}}\, \|\BDF_2 \eta_u^{n+2}\|_\Amat^2 
	+ \omega\, \tau^{\frac{\beta-3}{2}}\, \|\psi^{n+2}\|_\Cmat^2 
	+ 2\tau\, \|\eta_p^{n+2}\|_\Bmat^2 
	+ \calO(\tau^5).
\end{align*}
By \eqref{eq:general-convergence:single-step} this is bounded by  
\begin{align*}
	\omega\, \tau^{-\frac{\beta-3}{2}}\, \|\BDF_2 \psi^{n+2}\|_\Ctau^2
	- \tau^{\frac{5-\beta}{2}}\, \big(\nu_\Bmat^{n+2} - \nu_\Bmat^{n+1}\big)
	+ \omega\, \tau^{\frac{\beta-3}{2}}\, \|\psi^{n+2}\|_\Cmat^2 
	+ 2\tau\, \|\eta_p^{n+2}\|_\Bmat^2 
	+ \calO(\tau^5).
\end{align*}
Absorbing $2\tau\, \|\eta_p^{n+2}\|_\Bmat^2$ and applying~\eqref{eq:convergence-splitting:stability1}, i.e., $\| \psi^{n+2} \|_\Ctau \le C_1\,\tau^2$, the full inequality reads 
\begin{align*}
	&\nu^{n+2}_\Amat - \nu^{n+1}_\Amat + \nu^{n+2}_\Cmat - \nu^{n+1}_\Cmat 
	+ \tau^{\frac{5-\beta}{2}}\, \big(\nu_\Bmat^{n+2} - \nu_\Bmat^{n+1}\big) \\
	&\hspace{2.6cm}\le \omega\, \tau^{-\frac{\beta-3}{2}}\, \|\BDF_2 \psi^{n+2}\|_\Ctau^2
	+ \omega\, \tau^{\frac{\beta-3}{2}}\, \|\psi^{n+2}\|_\Cmat^2  
	+ \calO(\tau^5) \\
	&\hspace{2.6cm}\le \omega\, \tau^{-\frac{\beta-3}{2}}\, \|\BDF_2 \psi^{n+2}\|_\Ctau^2
	+ \omega\,C_1^2\, \tau^{\frac{\beta+5}{2}}
	+ \calO(\tau^5).
\end{align*}
Finally, we sum over $n$ and use $\BDF_2 \psi^{n+2} = \tfrac32\, (\psi^{n+2} - \psi^{n+1}) - \tfrac12\, (\psi^{n+1} - \psi^n)$, which allows the application of~\eqref{eq:convergence-splitting:stability2} or the improved estimate from Remark~\ref{rem:additional-assumption} as long as $n \ge 2$. For $n=0,1$, the BDF-terms are bounded due to $\psi^1 = \psi^0 = 0$ and~$\|\psi^2\|_\Ctau \le C_1\, \tau^2$. Moreover, applying \eqref{eq:general-convergence:nu-ineq-p} as well as $\nu_\Amat^1 = \calO(\tau^4)$ and $\nu_\Bmat^1 = \nu_\Cmat^1 = \calO(\tau^4)$, we get 
\begin{align*}
	\|\eta_u^n\|_\Amat^2 
%	+ \tau^{\frac{5-\beta}{2}}\, \|\eta_p^n\|_\Bmat^2 
	+ \|\eta_p^n\|_\Cmat^2 
	\le \nu_\Amat^{n} + \tau^{\frac{5-\beta}{2}} \nu_\Bmat^{n+2} + \nu_\Cmat^{n} 
	\le \omega\, \tau^{\frac{\beta+3}{2}} \big( 4 C_2 + 2T C_1^2 \big) + T\, \calO(\tau^4).
\end{align*}
As stated, $\beta = 4$ leads to a convergence rate of $1.75$, whereas for $\beta = 5$ we achieve the full rate of $2$. 
\end{proof}
\begin{remark}
The iteration bound~\eqref{eq:iterationBound} can be solved for $K$, leading to the requirement
\[ 
	K 
	> K(\omega)
	\coloneqq 1 + \frac{\log\omega + \log 3}{\log(\omega + 2) - \log(\omega)}. 
\]
Since this is well-defined for all positive~$\omega$, we see that stability and (second-order) convergence can be achieved for all choices of the coupling strength~$\omega$ with an appropriate number of inner iteration steps~$K$.
\end{remark}
%
%
%=============================================================================
%=========  Numerics
%=============================================================================
\section{Numerical Examples}\label{sec:numerics}
In this section, we verify our convergence results experimentally. We first use the model problem from \cite{AltMU21} and a simple poroelastic example to analyze the sharpness of our proven iteration bound \eqref{eq:iterationBound}. Following this, we apply the scheme to a more realistic three-dimensional problem modeling pressure changes in the brain. For this, we also compare the runtimes of different numerical approaches.
%
%
%=============================================================================
\subsection{Sharpness of the iteration bound}\label{sec:numerics:sharpness}
For a study of the iteration bound \eqref{eq:iterationBound}, we consider the model problem from~\cite{AltMU24}. More precisely, we consider a finite-dimensional setting with the bilinear forms 
\begin{subequations}
\label{eq:modelProblem}
\begin{align}
	a(u, v) = v^T\Amat u, \quad
	b(p, q) = q^T\Bmat p, \quad 
	c(p, q) = q^T\Cmat p, \quad
	d(v, p) = \sqrt{\tilde \omega}\, p^T\Dmat v
\end{align}
and matrices
\begin{align}
	\Amat = \tfrac{1}{2 - \sqrt{2}} 
	\begin{bmatrix} 2 & -1 & 0 \\ -1 & 2 & -1 \\ 0 & -1 & 2 \end{bmatrix}, \qquad
	\Bmat = 1, \qquad
	\Cmat = 1, \qquad
	\Dmat = \tfrac{1}{3}\, \big[\ 2\ \ 1\ \ 2\ \big]. %\begin{bmatrix} 2 & 1 & 2 \end{bmatrix}. 
\end{align}
\end{subequations}
The coupling parameter is then indeed given by $\tilde \omega$, since the matrices are normalized such that $c_\Amat = c_\Cmat = 1$ and $C_\Dmat = \sqrt{\tilde \omega}$. Additionally, we apply the right-hand sides
\[ 
	f(t) \equiv \big[\ 1\ \  1\ \  1\ \big]^T, \qquad 
	g(t) = \sin(t)
\]
and set as initial condition~$p^0=p(0)=1$. We then compute $u^0$ in a consistent manner and approximations $p^1$ and $u^1$ using one step of the implicit Euler scheme. Recall that this results in initial data, which satisfies the consistency assumption~\eqref{eq:initConsistency} as well as the stability assumption~\eqref{eq:initStability}.

We run the scheme for $2^{10}$ time steps, i.e., until time $T = 1$ for $K = 1, \dots, 5$ and compare the results to a reference solution obtained by the midpoint scheme. We observe that our prediction~\eqref{eq:iterationBound} is exceeded by a moderate factor, cf.~Figure~\ref{fig:sharpnessComparison}. Therein, we plot the theoretically needed number of inner iterations $K(\tilde \omega)$ as well as numerically observed bounds of $\tilde \omega$ for each $K$, for which the error seems to stay bounded. In particular, one can observe that even values of $K$ perform much better regarding the stability. Using $K = 2$ or $K = 4$, for example, seems better than using $K = 3$ or $K = 5$, respectively. 
\begin{figure}
	\begin{tikzpicture}
		\begin{axis}[
			width=0.75\textwidth,
			height=0.4\textwidth,
			axis lines = left,
			xlabel = coupling paramter~$\tilde \omega$,
			ylabel = \(K(\tilde \omega)\),
			xmin = 0, xmax=8.4,
			ymin = 0, ymax=10.9,
			grid = both,
		    grid style={line width=.1pt, draw=gray!10},
		    major grid style={line width=.2pt,draw=gray!50},
			minor x tick num=1,
			minor y tick num=1,
			legend pos = north west,
			legend cell align = {left},
		]
			\addplot [
				domain = 0.1:8,
				samples = 100,
				variable = \t,
				color = mycolor4,
				very thick,
			]
			({t}, {1 + ln(3*t)/(ln((t + 2)/t))});
%			\addplot [  % for even K
%				domain = 0.1:8,
%				samples = 100,
%				variable = \t,
%				color = mycolor4,
%				dashed,
%				very thick,
%			]
%			({t}, {1 + ln(t)/(ln((t + 2)/t))}); 
			\addplot [
			mark = pentagon*,
			mark size = 3,
			color = mycolor3,
			only marks,
			] coordinates
			{
				(0.32, 1)
				(2.8, 2)
				(2.5, 3)
				(7.3, 4)
				(5.6, 5)
			};
			\addplot [
			mark = o,
			thick,
			mark size = 3,
			color = mycolor5,
			only marks,
			] coordinates
			{
				(0.3, 1)
				(2.9, 2)
				(2.6, 3)
				(8.0, 4)
			};
			\legend{proven bound~\eqref{eq:iterationBound}, model problem \eqref{eq:modelProblem}, poro problem \eqref{eq:sharpness:poroProblem}}
		\end{axis}
	\end{tikzpicture}
	\caption{Comparison of the proven iteration bound~\eqref{eq:iterationBound} and numerically observed values for the model problem \eqref{eq:modelProblem} and the poroelastic example~\eqref{eq:sharpness:poroProblem} }
	\label{fig:sharpnessComparison}
\end{figure}
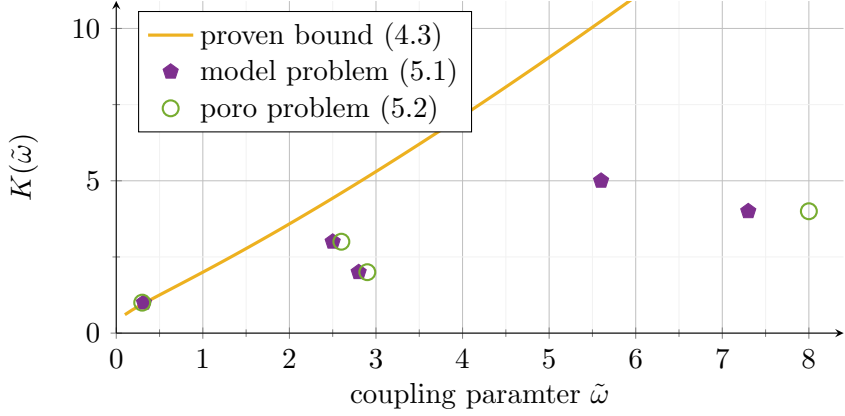

We also run a similar test using an actual poroelastic model problem. On a unit square domain we set
\begin{subequations}
\label{eq:sharpness:poroProblem}
\begin{equation}
M = \kappa = \nu = 1, \qquad 
\mu = \lambda = \tfrac12, \qquad 
\alpha = \sqrt{\omega}, \\
\end{equation}
\begin{equation}
f(x, y, t) = xy (1 - x) (1 - y), 
\qquad g(x, y, t) = \sin(t).
\end{equation}
\end{subequations}
Again, one can see that the coupling parameter is given by $\omega$. We choose $u^0$ and $p^0$ to be the solution to the static problem at $t = 0$, i.e., the solution to the equation
\begin{subequations} \label{eq:static}
\begin{align}
	\Amat u^0 - \Dmat^T p^0 
	&= f(0), \\
	\Bmat p^0 
	&= g(0).
\end{align}
\end{subequations}
As before, we use a single step of the implicit Euler method to find $u^1$ and $p^1$ such that assumptions~\eqref{eq:initConsistency} and~\eqref{eq:initStability} are satisfied. We observe a very similar behaviour regarding the iteration bound as for the model problem~\eqref{eq:modelProblem}; see Figure~\ref{fig:sharpnessComparison}.
%
%
%=============================================================================
\subsection{Simulation of 3D brain tissue} \label{sec:numerics:brain}
We now turn to a more practical example taken from medical literature \cite{JuCLT20}. In this paper, the poroelastic response of brain tissue as a result of increased pressure is simulated on a 2D slice of the brain. Here, this model is extended to a 3D mesh obtained from~\cite{CZK+98, Fang10}; see Figure~\ref{fig:mesh}. Due to the large number of degrees of freedom and the bad conditioning of the matrices, % in 3D
we expect notable advantages of the introduced scheme compared to fully implicit methods. 
\begin{figure}
\includegraphics[width=0.5\textwidth]{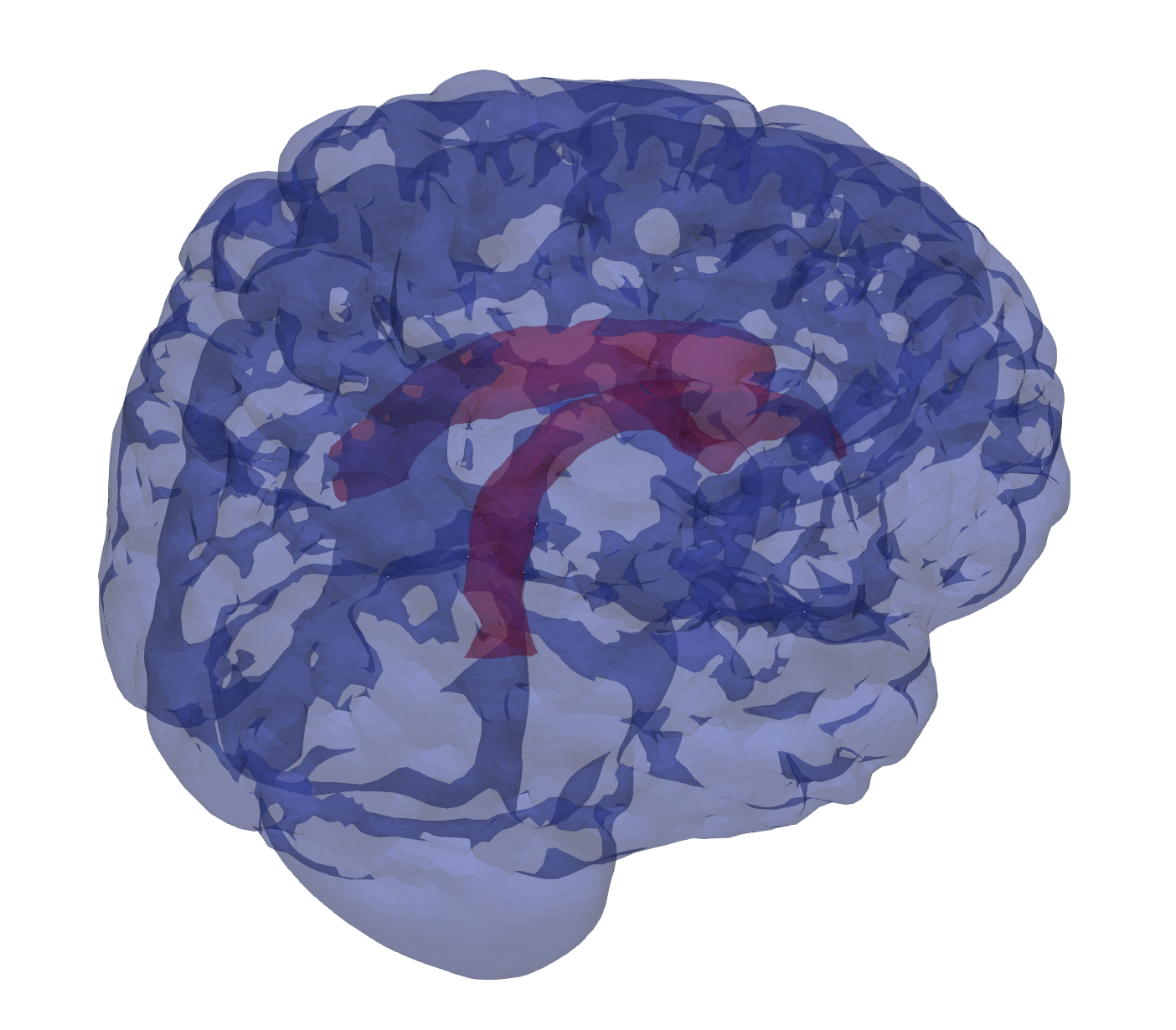}
\caption{ \label{fig:mesh}
	The 3D mesh used for the brain tissue simulation obtained from \cite{CZK+98, Fang10}. The boundary is divided into an interior part~$\Gamma_\text{int}$ (red) and an exterior part~$\Gamma_\text{ext}$ (blue).	}
\end{figure}

We use the same material constants as in~\cite{AltD24}, i.e., the parameters from~\cite[Tab.~10]{JuCLT20} with an adjustment of the Biot modulus~$M$, leading to a coupling strength of~$\omega\approx 2.8$. This is justified by other values used in medical literature, cf.~\cite{EliRT23,JuCLT20,PieRV23}. More precisely, we set 
\begin{align*}
	\lambda &= 7.8\times10^3 \newton/\meter^2, &
	\mu &= 3.3\times10^3 \newton/\meter^2, &
	\alpha &= 1, \\
	\kappa &= 1.3\times10^{-15} \meter^2, &
	\nu &= 8.9\times10^{-4} \newton\second/\meter^2, &
	M &= 2.2\times10^{4} \newton/\meter^2.
\end{align*} 
Additionally, we impose boundary conditions on the exterior and interior boundary of the mesh, which we will call $\Gamma_\text{ext}$ and $\Gamma_\text{int}$, respectively. These conditions are given by
\begin{subequations} 
\label{eq:brain:bc}
\begin{align}
	u &= 0 && \text{on } \Gamma_\text{ext}, \\
	\left(\tfrac{\kappa}{\nu}\, \nabla p\right) \cdot n 
	&= c_{\operatorname{SAS}} (p_{\operatorname{SAS}} - p) && \text{on } \Gamma_\text{ext}, \\
	(\sigma(u) - \alpha p) \cdot n 
	&= -p \cdot n && \text{on } \Gamma_\text{int}, \label{eq:brain:neumann-bc} \\
	p 
	&= 1100 \newton/\meter^2 && \text{on } \Gamma_\text{int}, \label{eq:brain:dirichlet-bc}
\end{align}
\end{subequations}
where $c_{\operatorname{SAS}} = 5.0\times10^{-10} \meter^3/(\newton\second)$ is the conductance, $p_{\operatorname{SAS}} = 1070 \newton/\meter^2$ the pressure outside the brain tissue wall, and $n$ the outer normal unit vector. Again we choose initial conditions as the solution of the static problem \eqref{eq:static} with $f \equiv 0$ and $g \equiv 0$. This corresponds to the neutral state of the brain. As right-hand side, we set 
\[
	g(x, t) = \begin{cases} 1.5\times10^{-4}\second^{-1} & x \in \Omega_d \\ 0 & x \notin \Omega_d \end{cases},
\]
where $\Omega_d \subseteq \Omega$ is a small region on the right side of the mesh, simulating increased pressure as a result of this part of the tissue being damaged. As before, we compute appropriate values $u^1$ and $p^1$ by the implicit Euler scheme. This time, however, using four smaller time steps in order to increase the accuracy of the data.

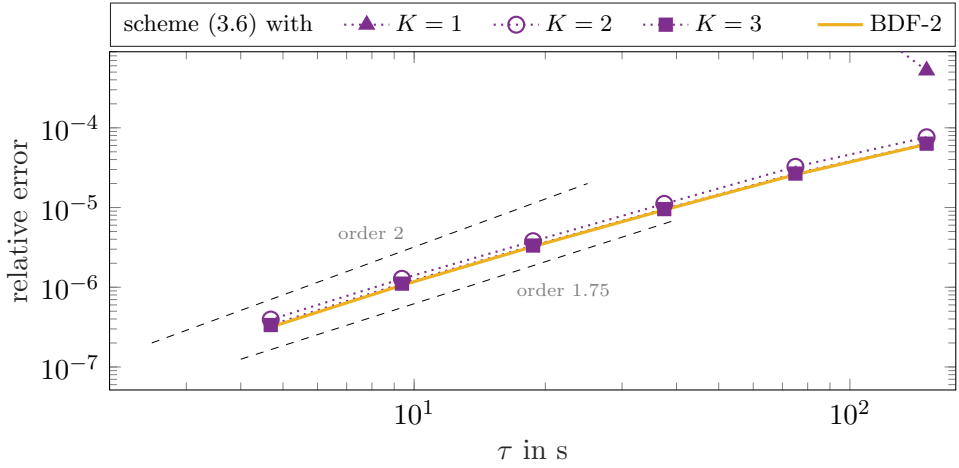
\begin{figure}
%%%%%%% omega = 2.8 %%%%%%%%
\begin{tikzpicture}
		\begin{axis}[%
		width=0.75\textwidth,
		height=0.3\textwidth,
		at={(0\textwidth,0\textwidth)},
		scale only axis,
		unbounded coords=jump,
		xlabel style={font=\color{white!15!black}},
		xlabel={$\tau$ in s},
		xmode=log,
		ymode=log,
		% yminorticks=true,
		ylabel style={font=\color{white!15!black}},
		ylabel={relative error},
		axis background/.style={fill=white},
		legend columns = 5,
		legend style={
			at={(1.0,1.08)},
			anchor=east,
			legend cell align=left,
			align=left,
			draw=white!15!black,
			font=\Small
		},
		xmin = 2,
		xmax = 175,
		ymax = 0.9e-3,
		% mark repeat=2,
		% mark phase=4,
		mark size=3.0,
		]
		\addlegendimage{empty legend}
		\addlegendentry{scheme~\eqref{eq:scheme} with\quad}

		\addplot [color = mycolor3, dotted, mark=triangle*, thick, every mark/.append style={solid}] table [col sep=semicolon, x index = {0}, y index = {1}] {experiments/output/convergence_3d.csv};
		\addlegendentry{$K = 1$\quad}
		\addplot [color = mycolor3, dotted, mark=o, thick, every mark/.append style={solid}] table [col sep=semicolon, x index = {0}, y index = {2}] {experiments/output/convergence_3d.csv};
		\addlegendentry{$K = 2$\quad}
		\addplot [color = mycolor3, dotted, mark=square*, thick, every mark/.append style={solid}, mark size=2.3] table [col sep=semicolon, x index = {0}, y index = {3}] {experiments/output/convergence_3d.csv};
		\addlegendentry{$K = 3$\qquad}
		
		\addlegendentry{BDF-$2$}
		
		\addplot [color = mycolor4, very thick, every mark/.append style={solid}] table [col sep=semicolon, x index = {0}, y index = {4}] {experiments/output/convergence_3d.csv};

		% order two
		\addplot [black, dashed, very thin] coordinates {
			(0.25e1, 0.2e-6) 
			(0.25e2, 0.2e-4) 
		};
		\node[anchor=south east, gray] at (axis cs:1e1, 0.3e-5) {\tiny order $2$};

		% order 1.75
		\addplot [black, dashed, very thin] coordinates {
			(0.4e1, 0.125e-6) 
			(0.4e2, 0.25*0.281e-4) 
		};
		\node[anchor=north, gray] at (axis cs:2.2e1, 1.5e-6) {\tiny order $1.75$};

%		% order 1
%		\addplot [black, dashed, very thin] coordinates {
%			(1e1, 1e-6) 
%			(1e2, 1e-5) 
%		};

		\end{axis}
		\end{tikzpicture}

		\caption{ \label{fig:brain-convergence}
			Relative error of the proposed and the implicit BDF-$2$ scheme as an $L^2$ average over the timespan $t \in [0, 10 \minute]$. The solution is compared to a reference computed using a midpoint scheme.
		}
\end{figure}

We compare our scheme to the fully implicit BDF-$2$ scheme~\eqref{eq:BDFimplicit} for different time step sizes, cf.~Figure~\ref{fig:brain-convergence}. The reference solution was computed with the midpoint scheme.  
% The error is computed as
% \[\frac{\big(\sum_{n = 0}^N \|u^n - u(t^n)\|_\Amat^2 + \|p^n - p(t^n)\|_\Cmat^2 \big)^{1/2}}{\big(\sum_{n = 0}^N \|u(t^n)\|_\Amat^2 + \|p(t^n)\|_\Cmat^2 \big)^{1/2}}. \]
One can observe that the schemes perform very similar in terms of convergence as long as the number of inner iterations is sufficiently large. This indicates that the inner fixpoint iteration converges very quickly. Moreover, we can see that in this realistic example the number of needed inner iterations $K$ is even lower than the (artificial) examples of Section~\ref{sec:numerics:sharpness} suggest. In particular, the novel scheme performs very well for $K = 3$, which we would not expect to be sufficient for $\omega = 2.8$ according to Figure~\ref{fig:sharpnessComparison}. The numerically observed convergence rate for the proposed method with $K = 2, 3$ as well as for the implicit BDF-2 scheme is around $1.9$ (for small $\tau$). 
%
%
%=============================================================================
%\subsection{Runtime comparison}

The spectra of the matrices of the linear system arising from a BDF-$2$ discretization are very similar to the ones resulting from implicit first-order discretizations. The only difference is the prefactor of $\tau \Bmat$ which does not have big influence on the system. As such, our considerations in choosing preconditioners and solvers for the linear systems are mostly the same as in our previous paper~\cite{AltD24}. Specifically, we use the \textit{conjugate gradient method} for the solution of the decoupled equations. The matrix $\Amat$ is preconditioned using an algebraic multigrid preconditioner~\cite{YanH02}, whereas a Jacobi preconditioner is used for $\Ctau$. The fully implicit system, on the other hand, is solved using \textit{MinRes} with a Schur preconditioner that approximates
\[ 
	\begin{bmatrix} \Amat^{-1} & 0 \\ 0 & \Smat^{-1} \end{bmatrix}
\]
with $\Smat \coloneqq \Dmat^T \Amat \Dmat + \Ctauinv$. We approximate $\Ctauinv$ by its inverse diagonal and $\Amat^{-1}$ and $\Smat^{-1}$ using an algebraic multigrid preconditioner.

Surely, the implicit as well as the semi-explicit method may be further improved. Nevertheless, our implementations seem fair for a runtime comparison. For this, we solve the brain tissue problem once again for different time step sizes and record the corresponding errors and runtimes. The time needed to compute $u^0$ and $p^0$ is not included. The results can be seen in Figure~\ref{fig:brain-error-time}. One can observe that the newly introduced scheme is around four times faster than BDF-$2$, as long as the number of inner iterations $K$ is chosen properly. The second-order fixed stress scheme from Section~\ref{sec:methods:fixedstress} can be faster for lower accuracies, but becomes slower for smaller time steps. 
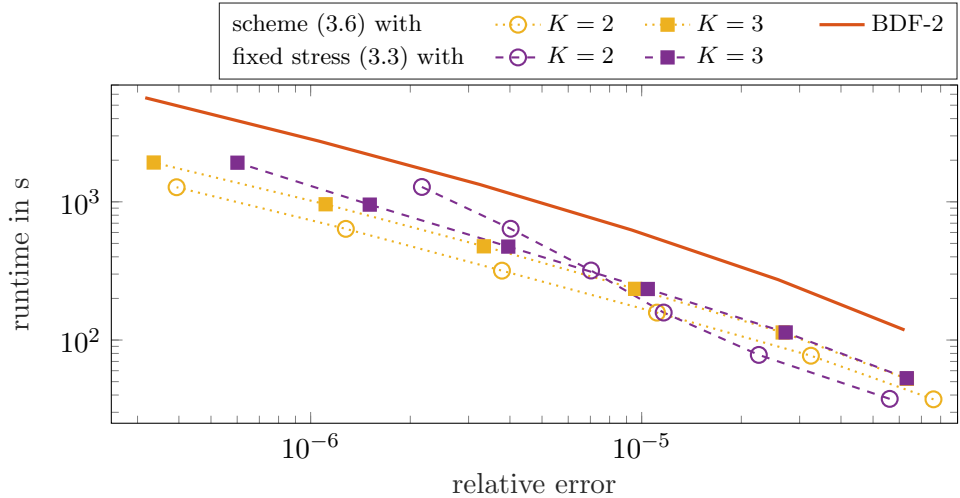
\begin{figure}
%%%%%%% omega = 2.8 %%%%%%%%
\begin{tikzpicture}
		\begin{axis}[%
		width=0.75\textwidth,
		height=0.3\textwidth,
		at={(0\textwidth,0\textwidth)},
		scale only axis,
		unbounded coords=jump,
		xlabel style={font=\color{white!15!black}},
		xlabel={relative error},
		xmode=log,
		ymode=log,
		% yminorticks=true,
		ylabel style={font=\color{white!15!black}},
		ylabel={runtime in s},
		axis background/.style={fill=white},
		legend columns = 4,
		legend style={
			at={(1.0,1.13)},
			anchor=east,
			legend cell align=left,
			align=left,
			draw=white!15!black,
			font=\Small
		},
		xmin = 2.5e-7,
		xmax = 9e-5,
		ymin = 25,
		ymax = 7000,
		% mark repeat=2,
		% mark phase=4,
		mark size=3.0,
		]
		\addlegendimage{empty legend}
		\addlegendentry{scheme~\eqref{eq:scheme} with\quad}

		% \addplot [color = mycolor4, dotted, mark=triangle*, thick, every mark/.append style={solid}] table [col sep=semicolon, y index = {0}, x index = {7}] {experiments/output/error_time_3d.csv};
		% \addlegendentry{$K = 1$\quad}
		\addplot [color = mycolor4, dotted, mark=o, thick, every mark/.append style={solid}] table [col sep=semicolon, y index = {1}, x index = {8}] {experiments/output/error_time_3d.csv};
		\addlegendentry{$K = 2$\quad}
		\addplot [color = mycolor4, dotted, mark=square*, thick, every mark/.append style={solid}, mark size=2.3] table [col sep=semicolon, y index = {2}, x index = {9}] {experiments/output/error_time_3d.csv};
		\addlegendentry{$K = 3$\qquad}

		\addlegendentry{BDF-$2$}
		
		\addplot [color = mycolor2, very thick, every mark/.append style={solid}] table [col sep=semicolon, y index = {3}, x index = {10}] {experiments/output/error_time_3d.csv};

		\addlegendimage{empty legend}
		\addlegendentry{fixed stress~\eqref{eq:fixed-stress} with\quad}

		% \addplot [color = mycolor3, dashed, mark=triangle*, thick, every mark/.append style={solid}] table [col sep=semicolon, y index = {4}, x index = {11}] {experiments/output/error_time_3d.csv};
		% \addlegendentry{$K = 1$\quad}
		\addplot [color = mycolor3, dashed, mark=o, thick, every mark/.append style={solid}] table [col sep=semicolon, y index = {5}, x index = {12}] {experiments/output/error_time_3d.csv};
		\addlegendentry{$K = 2$\quad}
		\addplot [color = mycolor3, dashed, mark=square*, thick, every mark/.append style={solid}, mark size=2.3] table [col sep=semicolon, y index = {6}, x index = {13}] {experiments/output/error_time_3d.csv};
		\addlegendentry{$K = 3$\qquad}
		\end{axis}
		\end{tikzpicture}

		\caption{ \label{fig:brain-error-time}
			Runtime comparison of the novel scheme~\eqref{eq:scheme} and the implicit BDF-$2$ scheme for solving the brain tissue problem of Section~\ref{sec:numerics:brain}.
		}
\end{figure}
%
%
%=============================================================================
%\subsection{Nonlinear example}
%\RA{könnten eigentlich auch mal was nichtlineares testen}
% mention \cite{AltM22}
%
%
%=============================================================================
%=========  Conclusions
%=============================================================================
\section{Conclusions}
Within this paper, we have extended the iterative scheme introduced in~\cite{AltD24} to second order. The scheme combines the semi-explicit approach with an inner iteration, which allows to consider linear poroelasticity with arbitrary coupling strength~$\omega$. Depending on the size of~$\omega$, the number of inner iteration steps~$K$ can be computed a priori, guaranteeing convergence of the scheme. A real-world numerical experiment of brain tissue proves the supremacy of the proposed method. 
%
%
%=============================================================================
%=========  Acknowledgments
%=============================================================================
\section*{Acknowledgments} 
RA acknowledges support by the Deutsche Forschungsgemeinschaft (DFG, German Research Foundation) - 467107679. Moreover, parts of this work were carried out while RA was affiliated with the Institute of Mathematics and the Centre for Advanced Analytics and Predictive Sciences (CAAPS) at the University of Augsburg. 
MD acknowledges support by the Deutsche Forschungsgemeinschaft - 455719484.
%
%
%=============================================================================
%=========  Bibs
%=============================================================================
\bibliographystyle{alpha}
\bibliography{references}
%
%
%=============================================================================
%=========  Appendix
%=============================================================================
\appendix
\section{Proof of a Norm Estimate}\label{app:proofs}
In this appendix, we present %the technical proof showing 
a matrix norm estimate used in Proposition~\ref{prop:convergence-splitting}. 
\begin{lemma}[Norm of the recursion matrix] \label{lem:recursion-matrix}
Under the assumptions of Proposition \ref{prop:convergence-splitting}, including the bound $3\,\omega^K < (2+\omega)^{K-1}$, the matrix 
\[
	\calS \coloneqq
	\begin{bmatrix}
	2\Tmat\Smat^{K-1} + \tfrac13 \mathbf{I} & -\tfrac53\Tmat\Smat^{K-1} & \tfrac13\Tmat\Smat^{K-1} \\
	\mathbf{I} & 0 & 0 \\
	0 & \mathbf{I} & 0
	\end{bmatrix}
	\in \R^{3n_p, 3n_p}
\]
satisfies $\|\calS\|_\Ctau < 1$.
\end{lemma}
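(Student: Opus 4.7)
The plan is to reduce $\calS$ to a one-parameter family of $3\times 3$ matrices via simultaneous diagonalization, bound their spectral radii uniformly below one via the Schur--Cohn/Jury criterion, and translate this into the asserted norm bound through the discrete Lyapunov theorem.

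First, a short computation gives $\langle\Tmat x, y\rangle_\Ctau = -x^T\Dmat\Amat^{-1}\Dmat^T y$, which is symmetric in $(x,y)$, so $\Tmat$ is $\Ctau$-self-adjoint. Hence $\Smat = \gamma\Tmat + (1-\gamma)\mathbf{I}$ is $\Ctau$-self-adjoint and commutes with $\Tmat$, and the two can be simultaneously diagonalized in a $\Ctau$-orthonormal basis. Writing the joint eigenvalues as $t_i\in[-\omega,0]$ and $s_i = \gamma t_i + (1-\gamma) \in [-(1-\gamma),1-\gamma]$, and setting $a_i := t_i\, s_i^{K-1}$, one has $|a_i| \le \omega(1-\gamma)^{K-1} = \omega^K/(2+\omega)^{K-1} =: \mu$; the hypothesis $3\omega^K<(2+\omega)^{K-1}$ is exactly $\mu < 1/3$. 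After grouping coordinates by eigenindex, $\calS$ becomes the direct sum of the $3\times 3$ companion-type matrices
\[
 M(a) = \begin{pmatrix} 2a+\tfrac13 & -\tfrac{5a}{3} & \tfrac{a}{3} \\ 1 & 0 & 0 \\ 0 & 1 & 0 \end{pmatrix}, \qquad a = a_i \in [-\mu,\mu].
\]

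Next, I would compute the characteristic polynomial $p_a(z) = z^3 - (2a+\tfrac13)z^2 + \tfrac{5a}{3}z - \tfrac{a}{3}$ and apply the Jury stability test. The three easy conditions $p_a(1) = \tfrac23(1-a)>0$, $p_a(-1) = -\tfrac13(4+12a)<0$, and $|p_a(0)|=|a|/3<1$ all hold strictly for $|a|<1/3$. The inner-determinant condition $1-\tfrac{a^2}{9} > \big|\tfrac{2a^2}{3}-\tfrac{14a}{9}\big|$ splits on the sign of $a$: the branch $a\ge 0$ reduces to $5a^2-14a+9>0$ (relevant roots $1$ and $9/5$), and the branch $a<0$ to $7a^2-14a-9<0$ (roots approximately $-0.51$ and $2.51$). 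Both hold strictly throughout $|a|\le \mu < 1/3$, and continuity on the compact set $[-\mu,\mu]$ yields $\rho(\calS) = \max_i \rho(M(a_i)) < 1$.

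Finally, from $\rho(\calS)<1$ the discrete Lyapunov theorem produces a symmetric positive definite matrix $\mathbf{W}$ on $\R^{3n_p}$ with $\calS^T\mathbf{W}\calS\prec\mathbf{W}$; constructed block-diagonally in the joint eigenbasis, $\mathbf{W}$ is uniformly equivalent to the block $\Ctau$-norm with equivalence constants depending only on $\omega$ and $K$. The induced norm then satisfies $\|\calS\|_{\mathbf{W}}<1$, which is the content of the lemma in the sense used in Proposition~\ref{prop:convergence-splitting}: substituting $\|\cdot\|_{\mathbf{W}}$ for the block $\Ctau$-norm in the geometric-series estimate only rescales the constants $C_1,C_2$ by a bounded factor. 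The main obstacle is the Jury sign analysis in the middle step, where the expression $\tfrac{2a^2}{3}-\tfrac{14a}{9}$ changes sign at $a=0$ and at $a=7/3$, forcing a clean case split at $a=0$ and a careful check that the strict bound $\mu<1/3$ delivered by the hypothesis leaves the required margin in each of the two resulting quadratic inequalities; the diagonalization and the Lyapunov step are otherwise routine linear algebra.
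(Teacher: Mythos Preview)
Your reduction to a one-parameter family of $3\times3$ companion blocks via simultaneous $\Ctau$-diagonalization of $\Tmat$ and $\Smat$ is exactly the paper's strategy. Where you invoke the Jury test, the paper instead computes the eigenvalues of $M(\lambda)$ in closed form: the characteristic polynomial factors as $(z-\tfrac13)(z^{2}-2\lambda z+\lambda)$, so the eigenvalues are $\tfrac13$ and $\lambda\pm\sqrt{\lambda^{2}-\lambda}$, and a short case split on the sign of $\lambda^{2}-\lambda$ shows these lie strictly inside the unit disc precisely when $-\tfrac13<\lambda<1$. This is cleaner than the Jury computation and delivers the same threshold; a rational-root factorization is worth attempting first in such settings.

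On the final step you are in fact more careful than the paper. The paper identifies $\|\calS\|_{\Ctau}$ with the spectral radius of the similarity-transformed block matrix, but that matrix is a non-normal block companion: already for the scalar block with $\lambda=0$ one computes $\|M(0)\|_{2}=\sqrt{10}/3>1$, so the literal assertion $\|\calS\|_{\Ctau}<1$ need not hold. What both arguments actually establish is $\rho(\calS)<1$, and your Lyapunov step is the correct way to convert this into a usable contraction: by compactness of $[-\mu,\mu]$ and continuity of the Lyapunov solution in the parameter, one obtains a norm uniformly equivalent to the block $\Ctau$-norm (with constants depending only on $\omega$ and $K$) in which $\calS$ contracts strictly. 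That equivalent-norm contraction is precisely what the geometric-series estimate in Proposition~\ref{prop:convergence-splitting} consumes, so your version is complete for the intended application and in fact closes a gap in the paper's presentation.
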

\begin{proof}
With the symmetric positive semi-definite matrix~$\tilde \Tmat \coloneqq \Cmat_\tau^{-1/2}\Tmat \Smat^{K-1} \Cmat_\tau^{1/2}$, the norm $\|\calS\|_\Ctau$ coincides with the spectral radius of 
\[
\begin{bmatrix}
2\tilde \Tmat + \tfrac13 \mathbf{I} & -\tfrac53\tilde \Tmat & \tfrac13\tilde \Tmat \\
\mathbf{I} & 0 & 0 \\
0 & \mathbf{I} & 0
\end{bmatrix}. 
\]
To compute the eigenvalues of this block matrix, one can simply do the same for the corresponding 3x3-matrix by replacing the instances of $\tilde \Tmat$ with a variable $\lambda$, i.e., 
\[
	\begin{bmatrix}
	2\lambda + \tfrac13 & -\tfrac53\lambda & \tfrac13\lambda \\
	1 & 0 & 0 \\
	0 & 1 & 0
	\end{bmatrix}.  
\]
This matrix has the eigenvalues $\tfrac13$ and $\lambda \pm \sqrt{\lambda^2 - \lambda}$ for any $\lambda \in \R$ and corresponding eigenvectors
\[
\begin{bmatrix} \tfrac19 \\ \tfrac13 \\ 1 \end{bmatrix}, \qquad
\begin{bmatrix} (\lambda - \sqrt{\lambda^2 - \lambda})^2 \\ \lambda - \sqrt{\lambda^2 - \lambda} \\ 1 \end{bmatrix}, \qquad
\begin{bmatrix} (\lambda + \sqrt{\lambda^2 - \lambda})^2 \\ \lambda + \sqrt{\lambda^2 - \lambda} \\ 1 \end{bmatrix}.
\]
Let $v_1, \dots, v_{n_p}$ be an eigenbasis of $\tilde \Tmat$ with eigenvalues $\lambda_1, \dots, \lambda_{n_p}$. It is then easy to see, that
\[
\begin{bmatrix} \tfrac19 v_i \\ \tfrac13 v_i \\ v_i \end{bmatrix}, \qquad
\begin{bmatrix} (\lambda - \sqrt{\lambda^2 - \lambda})^2 v_i \\ \lambda - \sqrt{\lambda^2 - \lambda} v_i \\ v_i \end{bmatrix}, \qquad
\begin{bmatrix} (\lambda + \sqrt{\lambda^2 - \lambda})^2 v_i \\ \lambda + \sqrt{\lambda^2 - \lambda} v_i \\ v_i \end{bmatrix}
\]
is an eigenbasis of the block matrix with corresponding eigenvalues $\tfrac13$ and $\lambda_i \pm \sqrt{\lambda_i^2 - \lambda_i}$.

We are interested in a criteria for the $\Ctau$-norm of $\calS$ to be smaller than $1$. Since $\tfrac13 < 1$, we only need to check that $|\lambda_i \pm \sqrt{\lambda_i^2 - \lambda}| < 1$ for all $i$. It is easy to see that this cannot be the case if $\lambda_i \le -1$ or $\lambda_i \ge 1$, so we will assume $|\lambda_i| < 1$. The case $\lambda_i = 0$ is trivial, so we will only consider $\lambda_i < 0$ and $\lambda_i > 0$. For the former the term inside the square root is positive, so we can calculate
\begin{align*}
\big|\lambda_i \pm \sqrt{\lambda_i^2 - \lambda_i}\big| < 1
&\iff \lambda_i - \sqrt{\lambda_i^2 - \lambda_i} > -1
\iff 1 + \lambda_i > \sqrt{\lambda_i^2 - \lambda_i} \\
&\iff (1 + \lambda_i)^2 > \lambda_i^2 - \lambda_i
\iff 1 + 2\lambda_i + \lambda_i^2 > \lambda_i^2 - \lambda_i \\
&\iff \lambda_i > -\tfrac13.
\end{align*}
Otherwise if $\lambda_i > 0$, due to $\lambda_i < 1$ the term inside the square root is negative and we get
\begin{align*}
\big|\lambda_i \pm \sqrt{\lambda_i^2 - \lambda_i}\big| < 1
&\iff \big|\lambda_i \pm \sqrt{\lambda_i^2 - \lambda_i} \big|^2 < 1
\iff \lambda_i^2 + (\lambda_i - \lambda^2) < 1 \\
&\iff \lambda_i < 1.
\end{align*}
Hence, the desired estimate holds if the spectral norm of $\tilde \Tmat$ is bounded by $1/3$. For the similar matrix $\Tmat \Smat^{K-1}$, the derivations in Lemma~\ref{lem:matrices} show 
\[
	\rho(\Tmat\Smat^{K-1})
	\le \omega(1-\gamma)^{K-1},
\]
leading to the sufficient condition $ \omega^K / (2+\omega)^{K-1} = \omega(1-\gamma)^{K-1} < \frac13$. 
\end{proof}
\end{document}